\documentclass[a4paper]{amsart}
\usepackage[margin=1in]{geometry}
\usepackage{verbatim}
\usepackage{multicol}

\usepackage{tikz}
\usetikzlibrary{calc}
\usetikzlibrary{arrows}
\usetikzlibrary{decorations.markings}
\usetikzlibrary{decorations.pathreplacing}
\usetikzlibrary{arrows,shapes,positioning}
\usetikzlibrary{shapes.geometric}
\usetikzlibrary{patterns}
\tikzstyle directed=[postaction={decorate,decoration={markings,
    mark=at position #1 with {\arrow{>}}}}]
\tikzstyle rdirected=[postaction={decorate,decoration={markings,
    mark=at position #1 with {\arrow{<}}}}]
\tikzset{anchorbase/.style={baseline={([yshift=-0.5ex]current bounding box.center)}}}

\tikzset{
    partial ellipse/.style args={#1:#2:#3}{
        insert path={+ (#1:#3) arc (#1:#2:#3)}
    }
  }

\usepackage{tikz-cd}

\usepackage{todonotes}

\usepackage{hyperref}
\usepackage[capitalize]{cleveref}
\crefname{figure}{Figure}{Figures}

\newcommand{\R}{\mathbb{R}}
\newcommand{\C}{\mathbb{C}}
\newcommand{\Z}{\mathbb{Z}}

\newcommand{\sll}{\mathfrak{sl}}

\newcommand{\zz}[1]{\mathbf{#1}}

\DeclareMathOperator{\Hom}{Hom}

\DeclareMathOperator{\Id}{Id}


\let\tilde=\widetilde

\newcommand{\calC}{\mathcal{C}}
\newcommand{\calP}{\mathcal{P}}
\newcommand{\id}{\mathrm{id}}

\renewcommand{\tilde}{\widetilde}

\theoremstyle{plain}
\newtheorem{theorem}{Theorem}[section]
\newtheorem{lemma}[theorem]{Lemma}
\newtheorem{corollary}[theorem]{Corollary}
\newtheorem{remark}[theorem]{Remark}
\newtheorem{definition}[theorem]{Definition}

\newtheorem{proposition}[theorem]{Proposition}
\newtheorem{question}[theorem]{Question}

\title[Extriangulated structures and braids]{Extriangulated structures from stability conditions and application to braid representations}

\author{Hoel Queffelec}
\address{France-Australia Mathematical Sciences and Interactions ANU - CNRS International Research Laboratory \\ Australia}
\email{hoel.queffelec@cnrs.fr}

\author{Anne-Laure Thiel}
\address{Université Bourgogne Europe, CNRS, IMB UMR 5584, 21000 Dijon, France}
\email{Anne-Laure.Thiel@ube.fr} 

\author{Emmanuel Wagner}
\address{Univ Paris Cité, IMJ-PRG, Univ Paris Sorbonne, UMR 7586 CNRS, F-75013, Paris, France}
\email{wagner@imj-prg.fr}

\date{}

\usepackage{fancyhdr}

\begin{document}

\begin{abstract}
We use the notion of Bridgeland stability condition and its associated metric to endow triangulated categories with extriangulated structures and study their extriangulated Grothendieck groups. This study is motivated by Khovanov-Seidel's categorification of the Burau representation, from which can be extracted the Lawrence-Krammer-Bigelow representation, providing a relation between these two faithful representations of the braid groups.    
\end{abstract}

\maketitle

\section{Introduction}

The Burau representation of the braid group was introduced by Burau~\cite{Burau} in 1934, who gave explicit formulas for it. It has since been reinterpreted through different lenses. The one closest to our considerations goes as follows. Consider the $n$-strand braid group $B_{n}$, and $V$ the vector representation of $U_q(\sll_{2})$. The braid group then acts on $V^{\otimes n}$ in a deformed version of Schur-Weyl duality, using $R$-matrices, or equivalently factoring through the Hecke algebra. The space $V^{\otimes n}$ can be decomposed into weight spaces indexed by $n,n-2,\dots,-n$, and because the action of $B_n$ commutes with the action of $U_q(\sll_2)$, it follows that each of these weight spaces is a representation of $B_n$. The one in weight $n$ is the trivial representation, and the one in weight $n-2$ is the Burau representation.

The question of the faithfulness of the Burau representation has received a lot of attention. It has been known for a long time that in the $n=3$ case, the representation is faithful~\cite{MagnusPeluso}. Moody, Long-Paton and Bigelow proved in several steps that the representation is not faithful whenever $n\geq 5$~\cite{Moody,LongPaton,Bigelow_Burau}. This leaves the case $n=4$ open.

This faithfulness defect was interestingly solved in two rather different ways at about the same time: Krammer and Bigelow, building on the work of Lawrence, provided a more complicated faithful finite-dimensional representation~\cite{Lawrence,Krammer,Bigelow_linearity}. On the other hand, Khovanov and Seidel upgraded the Burau representation to an action on a category and proved its faithfulness~\cite{KhS}. The goal of our paper is to relate both representations.

\subsection{LKB representation}

The LKB representation originates from homological considerations related to configuration spaces~\cite{Lawrence,Bigelow_linearity}, but can also be given a more algebraic taste~\cite{Krammer}. Closer to the presentation of the Burau representation given above, it can appear in the $n$-fold tensor product of a Verma module for $\sll_2$~\cite{Kohno,Martel,LTV_LKB} by focusing on the weight space in top weight minus $4$. Explicit formulas appear in different places; the ones we present in Section~\ref{sec:LKBdef} come from~\cite{PaPa}.

Faithfulness results for the LKB representation were proven by Krammer and Bigelow. Bigelow used a topological approach, while Krammer had an inductive proof using Garside structure, but both versions are similar in that they prove that some coefficients in top degree won't vanish.

\subsection{Khovanov-Seidel representation}

Khovanov and Seidel's approach to solving the faithfulness defect for the Burau representation took a different path~\cite{KhS}. Instead of increasing the dimension of the representation, but keeping it linear, they replaced it with a category, with generating objects corresponding to the basis of the Burau representation. We review the construction of the category in Section~\ref{sec:KhS}. For the moment, let us simply say that Khovanov and Seidel define a triangulated category, with an explicit list of indecomposable projectives that are used to build complexes up to homotopy. Braids then act by autoequivalences of the category.

The proof of faithfulness of Khovanov and Seidel's categorical representation is based on the fact that there exists a correspondence between curves in a punctured disk and some classes of objects in the category; thus one can use combinatorial and geometric arguments.

\subsection{Extriangulated structures}

It is an intriguing question to ask whether the two representations are related to each other. A hint in that direction is that several gradings can be considered on Khovanov-Seidel's category, and, for some of them, the set of indecomposable modules has the same cardinality as the LKB basis (see~\cite{BravThomas,LQ}) with similar descriptions. The problem is that the filtrations are not preserved by the action of the braid group, and thus one cannot find a unique way to lift the LKB basis. This difficulty is a reflection of a rich interface with algebraic geometry, motivating the study of the braid group action on the moduli space of stability conditions (see~\cite{BDL} and references therein). Importing this notion of stability condition from Bridgeland's work~\cite{Bridgeland} allows to use metric approaches to the study of the action. This perspective will prove very useful to our approach.

Another difficulty that should be mentioned is that a notion of $K_0$ that could be used to relate the categorical representation to the linear one was lacking, notably due to the two variables required by the LKB representation.

In this paper, we use the notion of extriangulated structure to weaken the notion of $K_0$ and obtain something in between the classical Grothendieck group (which is too small) and the split Grothendieck group (which is too big). This algebraic notion, closer to the world of cluster algebras, comes from Nakaoka and Palu~\cite{NP}. One of our main results (Theorem~\ref{thm:extriang}) is that there is a natural way to derive an extriangulated structure on Khovanov-Seidel's category from the choice of a stability condition. This result generalizes to any triangulated category equipped with a central charge under minor assumptions. It is based on the use of a special class of triangles that we call thin since the triangle inequality (using the Bridgeland metric) degenerates at one of their edges. It is interesting to note that the same result recently appeared in~\cite{WWZZ}, with other applications, as we were in the process of writing up our article.

The main theoretical consequence that we state in Theorem~\ref{thm:freeK0}, is that we can describe the extriangulated $K_0$ associated to this extriangulated structure. In the case of Khovanov-Seidel category, the surprise is that this allows for the appearance of a second variable, and the result has the same dimension as the LKB representation.

Unfortunately, there is one such $K_0$ for each stability condition, and the associated extriangulated structures are not preserved by the braid group action. We thus use a system of isomorphisms to rederive the LKB representation from these Grothendieck groups put together (see Theorem~\ref{thm:KhSToLKB}).

\subsection{Perspectives}

The expression of the base change matrices as described in \eqref{eq:alpha_formula} is quite intriguing. It seems that the coefficients of these matrices should, in general, be related to the dimension of some $\Hom$ spaces in a way that we would like to understand. Two other generalizations also directly come to mind. One could extend this towards Artin-Tits groups, using Paris' definition ~\cite{ParisLKB} for the LKB representation outside of type $A$. The most interesting outcome would be to be able to use this strategy for faithfulness questions. Another direction is to rather go towards mapping class groups, the linearity of which is yet to be proved. 

\subsection*{Acknowledgements}

Tony Licata has been closely associated to this project over the last ten years, and we are very grateful for the many ideas he shared over its development. We warmly thank Yann Palu for his patient explanations and his comments on early versions of this paper, including a fix to the proof of Theorem~\ref{thm:extriang}.

\section{Thin triangles}\label{Thin}

\subsection{Triangulated categories}

We will be working in the context of triangulated categories. These categories come with a collection of distinguished triangles and a shift functor $X\mapsto X[1]$, that are subject to a list of axioms~\cite{GelfandManin}. It will be convenient to adopt Dyckerhoff-Kapranov's conventions as in~\cite{DyKa}, where objects are represented by line segments and morphisms by angles. In particular, the octahedral axiom in triangulated categories expresses nicely as a flip in triangulations:
\[
\begin{tikzpicture}[anchorbase]
\draw [semithick] (0,0) rectangle (2,2);
\draw [semithick] (0,0) -- (2,2) ;
\node at (1,-.3) {$A$};
\node at (2.3,1) {$B$};
\node at (1,2.3) {$C$};
\node at (-.3,1) {$D$};
\node at (.8,1.2) {$E$};
\end{tikzpicture}
\quad \Longrightarrow \quad
\begin{tikzpicture}[anchorbase]
\draw [semithick] (0,0) rectangle (2,2);
\draw [semithick] (2,0) -- (0,2);
\node at (1,-.3) {$A$};
\node at (2.3,1) {$B$};
\node at (1,2.3) {$C$};
\node at (-.3,1) {$D$};
\node at (1.2,1.2) {$F$};
\end{tikzpicture}
\]
It will be useful to note that $F$ is constructed as a cone between $B$ and $C$ for example.

\subsection{Extriangulated structure}

The looser notion of extriangulated category will be central in this paper. The notion was introduced by Nakaoka and Palu~\cite{NP}, and we refer to Palu~\cite{Palu} for a survey on the topic. If the category we are working with comes with the structure of a triangulated category, then the definition can be given an easier statement. In particular, the two conditions (ET2) and (ET3) of the abstract definition are automatically satisfied in this setting.

\begin{definition}
An extriangulated structure on a triangulated category $\calC$ is the choice of a set $\Delta$ of exact triangles (called \emph{extriangles}) that respects the following conditions. 
If $X\rightarrow Y \rightarrow Z\xrightarrow{\delta}$ is an extriangle and $f\in \Hom_{\calC}(X,X')$, the triangle
\begin{equation}\tag{ET1}
X'\rightarrow Cone(f\circ \delta)\rightarrow Z\xrightarrow{f\circ \delta}
\label{eq:ET1}
\end{equation}
is an extriangle. Similarly, if $g\in Hom_\calC(Z',Z)$, $X\rightarrow Cone(\delta \circ g) \rightarrow Z'\xrightarrow{\delta\circ g}$ is an extriangle.

Furthermore, if $X$, $X'$, $Y$, $Z$ and $Z'$ are objects in $\calC$ so that the following exact triangles belong to $\Delta$:
\[
X\xrightarrow{f} Y \xrightarrow{f'} Z'\; \text{and} \; Y \xrightarrow{g} Z \xrightarrow{g'} X'
\]
then there exists $Y'\in \calC$ and maps so that the following diagram commutes and all triangles involved are in $\Delta$:
\begin{equation}
\tag{ET4}
\begin{tikzpicture}[anchorbase]
    \node (Xt) at (0,0) {$X$};
    \node (Xb) at (0,-2) {$X$};
    \node (Y) at (2,0) {$Y$};
    \node (Z) at (2,-2) {$Z$};
    \node (X'l) at (2,-4) {$X'$};
    \node (Z') at (4,0) {$Z'$};
    \node (Y') at (4,-2) {$Y'$};
    \node (X'r) at (4,-4) {$X'$};
    \draw [double] (Xt) -- (Xb);
    \draw [double] (X'l) -- (X'r);
    \draw [->] (Xt) to node[midway, above] {$f$} (Y);
    \draw [->] (Y) to node[midway, above] {$f'$} (Z');
    \draw [->] (Xb) to node[midway, above] {$h$} (Z);
    \draw [->] (Z) to node[midway, above] {$h'$} (Y');
    \draw [->] (Y) to node[midway, right] {$g$} (Z);
    \draw [->] (Z) to node[midway, right] {$g'$} (X'l);
    \draw [->] (Z') to node[midway, right] {$d$} (Y');
    \draw [->] (Y') to node[midway, right] {$e$} (X'r);
\end{tikzpicture}
\label{eq:ET4}
\end{equation} 
Similarly, if we have in $\Delta$:
\[X\rightarrow Y \rightarrow Z' \;\text{and}\; X'\rightarrow Z\rightarrow Y\]
then there exists $Y'$ so that the following diagram commutes and all triangles are in $\Delta$:
\begin{equation}
\tag{$\text{ET4}^{\text{op}}$}
\begin{tikzpicture}
    \node (X) at (0,0) {$X$};
    \node (Y) at (2,0) {$Y$};
    \node (Z') at (4,0) {$Z'$};
    \node (Z) at (2,2) {$Z$};
    \node (X') at (2,4) {$X'$};
    \node (Y') at (0,2) {$Y'$};
    \node (Z'2) at (4,2) {$Z'$};
    \node (X'2) at (0,4) {$X'$};
    \draw [->] (X) -- (Y);
    \draw [->] (Y) -- (Z');
    \draw [->] (X') -- (Z);
    \draw [->] (Z) -- (Y);
    \draw [->] (Y') -- (Z);
    \draw [->] (Z) -- (Z'2);
    \draw [->] (X'2)-- (Y');
    \draw [->] (Y') -- (X);
    \draw [double] (X'2) -- (X');
    \draw [double] (Z'2) -- (Z');
\end{tikzpicture}
\label{eq:ET4op}
\end{equation}

\end{definition}

Later, we will exhibit a class of triangles that will be referred to as \emph{thin}. The axioms above can be illustrated in this context as follows. The heuristics here is that if one mutates two triangles that are thin, the resulting triangles are thin too.
\[
\begin{tikzpicture}[anchorbase]
\draw [green] (0,0) to [out=-10,in=-170] (3,0);
\draw [green] (4,1) -- (4.2,1);
\node at (4.5,1) {$Z$};
\draw [red] (1,0) -- (3,0);
\draw [red] (4,.5) -- (4.2,.5);
\node at (4.5,.5) {$Y$};
\draw [blue] (0,0) -- (1,0);
\draw [blue] (4,0) -- (4.2,0); 
\node [blue] at (4.5,0) {$X'$};
\draw [purple] (1,0) -- (2,.2);
\draw [purple] (4,-.5) -- (4.2,-.5);
\node [purple] at (4.5,-.5) {$Z'$};
\draw [orange] (2,.2) -- (3,0);
\draw [orange] (4,-1) -- (4.2,-1);
\node [orange] at (4.5,-1) {$X$};
\draw [dashed] (0,0) to [out=-5,in=-165] (1.6,0) to [out=15,in=-120] (2,.2);
\node at (6,0) {$\longrightarrow$};
\draw [dashed] (7,.5) -- (7.5,.5);
\node at (7.8,.5) {$Y'$};
\node at (9,-.5) {\begin{minipage}{3cm} with $X\rightarrow Z \rightarrow Y'$ and $Z'\rightarrow Y' \rightarrow X'$ in $\Delta$ \end{minipage}};
\end{tikzpicture}
\]

\subsection{Stability conditions}

In order to pick out sets $\Delta$ to define extriangulated structures, we will use Bridgeland's notion of stability conditions on triangulated categories~\cite{Bridgeland}. Recall the definition of a stability condition.

\begin{definition}
  Let $\calC$ be a triangulated category. A stability condition $\tau$ is a pair $(\mathcal{P},Z)$ with:
  \begin{itemize}
  \item $\mathcal{P}$ a slicing \textit{i.e.} a full additive subcategory $\mathcal{P}(\phi)$ for all $\phi\in \R$; objects of these subcategories are called \emph{semi-stable}; semi-stable objects that are simple are called \emph{stable}; $\phi$ is called the phase of any object in $\mathcal{P}(\phi)$;
  \item $Z$ a central charge, which is a group homomorphism $Z:K_0(\calC)\mapsto \C$.
  \end{itemize}
 Moreover $\mathcal{P}$ and $Z$ are subject to the following conditions:
  \begin{enumerate}
  \item $\mathcal{P}(\phi+1)=\mathcal{P}(\phi)[1]$;
  \item if $A\in \calP(\phi)$ and $B\in \calP(\psi)$ with $\phi>\psi$ then $\Hom(A,B)=0$;
  \item for non-zero object $X\in \calC$, there exists a filtration (called \emph{Harder-Narasimhan filtration}, or HN-filtration for short):
    \[
    \begin{tikzpicture}
      \node (T0) at (0,0) {$0=X_0$};
      \node (T1) at (2,0) {$X_1$};
      \node (T2) at (4,0) {$X_2$};
      \node (Tdots) at (5,0) {$\cdots$};
      \node (Tn-1) at (6,0) {$X_{n-1}$};
      \node (Tn) at (8,0) {$X_n=X$};
      \node (B1) at (1,-2) {$Y_1$};
      \node (B2) at (3,-2) {$Y_2$};
      \node (Bdots) at (5,-2) {$\cdots$};
      \node (Bn) at (7,-2) {$Y_n$};
      \draw [->] (T0) -- (T1);
      \draw [->] (T1) -- (T2);
      \draw [->] (Tn-1) -- (Tn);
      \draw [->] (T1) -- (B1);
      \draw [->] (T2) -- (B2);
      \draw [->] (Tn) -- (Bn);
      \draw [dashed, ->] (B1) -- (T0) node [midway,xshift=-.2cm,yshift=-.1cm] {\tiny $+1$};
      \draw [dashed, ->] (B2) -- (T1) node [midway,xshift=-.2cm,yshift=-.1cm] {\tiny $+1$};
      \draw [dashed, ->] (Bn) -- (Tn-1) node [midway,xshift=-.2cm,yshift=-.1cm] {\tiny $+1$};
    \end{tikzpicture}
    \]
    where all triangles are exact, $Y_i\in \calP(\phi_i)$ and $\phi_1>\phi_2>\dots>\phi_n$;
  \item $Z\left(\calP(\phi)\setminus \{0\}\right)\subset \R^{+,*}e^{i\pi \phi}$.
  \end{enumerate}
  \end{definition}
  
The slices $\mathcal{P}([0,1))$ form the heart of a preferred $t$-structure associated to the slicing.

The set of stability conditions on $\calC$ will be denoted $Stab(\calC)$. There is a natural action of $\C^\ast$ on $Stab(\calC)$ given, on the central charge $Z$, and, on the slicing, by translation by the phase. The quotient will be denoted $PStab(\calC)=Stab(\calC)/\C^\ast$.

Following Bridgeland, one can define a metric on objects (called the \emph{mass} or $\tau$-\emph{mass}) by adding up the norm of the images under the central charge of all elements in the HN-filtration
\begin{equation} \label{eq:defm}
m_\tau(X)=\sum_{X_i\in HN(X)} |Z(Y_i)|.
\end{equation}
The mass of the $0$ object is set to be $0$.

\subsection{Thin triangles make an extriangulated structure}

In the next section, we will only use the existence of the metric $m_\tau$, which needs not come from a stability condition. We suppose given a metric $m_{\tau}$ that is additive under direct sum, invariant under shift and respects the triangle inequality on exact triangles. We then consider the following collection $\Delta_{\tau}$ of what we will refer to as thin triangles
\[
\Delta_{\tau}=\{
A\rightarrow B \rightarrow C\;|\;A,B,C\in \calC, \; m_{\tau}(B)=m_\tau(A)+m_\tau(C).
\}
\]

Here notice that this set of triangles is not stable under triangulated shifts: we impose that the object in the middle of the triangle is the long one. 

\begin{remark} \label{rem:idtriangle}
A key remark is that, unless $X$ is the zero object, the following triangle is \emph{not} thin
\[
X[1]\rightarrow Cone(\id_X) \rightarrow X.
\]
Indeed, the middle object is homotopic to zero, so its mass is null. The two rotations of this exact triangle would be thin, on the other hand. This will play a key role in endowing the Grothendieck group with a module structure over a polynomial ring, as we will see later when we recover the LKB representation of braids.
\end{remark}

The following is the main result of the first part of our paper.

\begin{theorem} \label{thm:extriang}
Let $\calC$ be a triangulated category with a Bridgeland stability condition $\tau$. Then the associated set of thin triangles $\Delta_{\tau}$ endows $\calC$ with an extriangulated structure.
\end{theorem}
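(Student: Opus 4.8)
The plan is to verify the axioms of the Definition directly for $\Delta_\tau$, recalling that \eqref{eq:ET4}'s companions (ET2) and (ET3) are automatic in the triangulated setting. Throughout I use only the three properties of $m_\tau$: additivity under direct sums, shift-invariance, and the triangle inequality $m_\tau(B)\le m_\tau(A)+m_\tau(C)$ for every exact triangle $A\to B\to C$. The key reformulation is that a triangle $A\to B\to C$ is thin precisely when this inequality is \emph{saturated}, so every thinness claim reduces to producing a matching lower bound. I would start with \eqref{eq:ET4}, which I expect to fall out of the octahedral axiom. Given thin triangles $X\xrightarrow{f}Y\to Z'$ and $Y\xrightarrow{g}Z\to X'$, apply the octahedral axiom to the composable pair $f,g$: it produces $Y'=\Cone(g\circ f)$ together with the triangles $X\to Z\to Y'$ and $Z'\to Y'\to X'$ and a commuting diagram as in \eqref{eq:ET4}. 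The hypotheses give $m_\tau(Y)=m_\tau(X)+m_\tau(Z')$ and $m_\tau(Z)=m_\tau(Y)+m_\tau(X')$, hence $m_\tau(Z)=m_\tau(X)+m_\tau(Z')+m_\tau(X')$. The triangle inequality for $X\to Z\to Y'$ reads $m_\tau(Z)\le m_\tau(X)+m_\tau(Y')$, which combined with this identity forces $m_\tau(Z')+m_\tau(X')\le m_\tau(Y')$; the reverse is the triangle inequality for $Z'\to Y'\to X'$, so both new triangles are thin. The axiom \eqref{eq:ET4op} is handled identically, applying the octahedral axiom to the composite $Z\to Y\to Z'$ extracted from the two given triangles and running the same two-sided squeeze.

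The step I expect to be the main obstacle is \eqref{eq:ET1}, because the naive octahedron relating the original triangle to its push-out introduces the object $\Cone(f)$, whose mass is uncontrolled and destroys the needed lower bound. The fix is to realize the push-out through a direct sum. Writing the original thin triangle as $X\xrightarrow{u}Y\xrightarrow{v}Z\xrightarrow{\delta}X[1]$ and given $f\colon X\to X'$, set $Y'=\Cone\!\big(\,X\xrightarrow{(u,f)}Y\oplus X'\big)$; this $Y'$ is, up to the canonical shift, the object $\Cone(f\circ\delta)$ of \eqref{eq:ET1}, and the octahedral axiom supplies both the triangle $X\to Y\oplus X'\to Y'$ and the push-out triangle $X'\to Y'\to Z$ with connecting map $f\circ\delta$. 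Now additivity gives $m_\tau(Y\oplus X')=m_\tau(Y)+m_\tau(X')$, so the triangle inequality for $X\to Y\oplus X'\to Y'$ together with $m_\tau(Y)=m_\tau(X)+m_\tau(Z)$ yields $m_\tau(X')+m_\tau(Z)\le m_\tau(Y')$, while the triangle inequality for $X'\to Y'\to Z$ gives the reverse bound; hence $f\circ\delta$ is thin.

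The pull-back half of \eqref{eq:ET1}, for $g\colon Z'\to Z$, is the exact dual: realize the pull-back as the fiber of $Y\oplus Z'\to Z$, apply additivity to the middle term $Y\oplus Z'$, and squeeze against the triangle inequality for $X\to \Cone(\delta\circ g)\to Z'$. Finally I would check the orientation conventions, since $\Delta_\tau$ is not shift-stable and singles out the middle (maximal-mass) object: in every triangle asserted to be thin above, the object displayed in the middle is exactly the one for which the mass identity was established, so the conventions match. The genuinely delicate point is thus only the choice of octahedron in \eqref{eq:ET1}; everything else is bookkeeping of triangle inequalities and additivity, organized around the principle that thinness is the saturation of the triangle inequality.
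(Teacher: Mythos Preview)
Your proposal is correct and follows essentially the same route as the paper. For \eqref{eq:ET4} your squeeze argument via the octahedral axiom is identical to the paper's; for \eqref{eq:ET1} your homotopy push-out $Y'=\Cone\!\big(X\xrightarrow{(u,f)}Y\oplus X'\big)$ is exactly the object the paper calls $\Cone(Z\to X')$, and the paper runs the very same inequality $m_\tau(Y)+m_\tau(X')\le m_\tau(X)+m_\tau(Y')$ combined with the thinness of the original triangle and the triangle inequality for $X'\to Y'\to Z$.
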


\begin{proof}
We first check \eqref{eq:ET1}. We consider the following exact triangle
\[
\begin{tikzpicture}[anchorbase]
\node (X) at (0,0) {$X$};
\node (Xp) at (3,.5) {$X'$};
\node at (3,0) {$\oplus$};
\node (ZX) at (3,-.5) {$Cone(Z\rightarrow X$)};
\node (ZXp) at (6,0) {$Cone(Z\rightarrow X').$};
\draw [->] (X) -- (Xp);
\draw [->] (X) -- (ZX);
\draw [->] (Xp) -- (ZXp);
\draw [->] (ZX) -- (ZXp);
\end{tikzpicture}
\]
Since this is an exact triangle, one reads
\[
m_{\tau}(X')+m_{\tau}(Z\rightarrow X)\leq m_{\tau}(X)+m_{\tau}(Z\rightarrow X').
\]
Now, one can use that $m_{\tau}(Z\rightarrow X)=m_{\tau}(X)+m_{\tau}(Z)$ (since the corresponding triangle is thin) to simplify the above inequality and get
\[
m_{\tau}(X')+m_{\tau}(Z) \leq m_{\tau}(Z\rightarrow X').
\]
Combined with the triangle inequality associated to the exact triangle $X'\rightarrow Cone(Z\rightarrow X')\rightarrow Z$, one reads
\[
m_{\tau}(Z\rightarrow X')=m_{\tau}(Z)+m_{\tau}(X')
\]
as desired. The other version of \eqref{eq:ET1} holds for similar reasons. 

Let us consider \eqref{eq:ET4} and triangles $X\rightarrow Y \rightarrow Z'$ and $Y\rightarrow Z \rightarrow X'$ that belong to $\Delta_{\tau}$. We have
\[
\begin{tikzpicture}[anchorbase]
\draw [semithick] (0,0) rectangle (2,2);
\draw [semithick] (0,0) -- (2,2) ;
\node at (1,-.3) {$X$};
\node at (2.3,1) {$Z'.$};
\node at (1,2.3) {$X'$};
\node at (-.3,1) {$Z$};
\node at (.8,1.2) {$Y$};
\end{tikzpicture}
\]
Using the octahedral axiom, we get the existence of $Y'$, the flip of $Y$, giving the commutative diagram from (ET4)
\[
\begin{tikzpicture}[anchorbase]
\node (Xt) at (0,0) {$X$};
\node (Y) at (2,0) {$Y$};
\node (Zp) at (4,0) {$Z'$};
\node (Xb) at (0,-2) {$X$};
\node (Z) at (2,-2) {$Z$};
\node (Yp) at (4,-2) {$Y'$};
\node (Xpm) at (2,-4) {$X'$};
\node (Xpr) at (4,-4) {$X'.$};
\draw [->] (Xt) -- (Y);
\draw [->] (Y) -- (Zp);
\draw [double] (Xt) -- (Xb);
\draw [->] (Y) -- (Z);
\draw [->] (Xb)-- (Z);
\draw [->] (Z) -- (Yp);
\draw [->] (Zp) -- (Yp) ;
\draw [double] (Xpm) -- (Xpr);
\draw [->] (Z) -- (Xpm);
\draw [->] (Yp) -- (Xpr);
\end{tikzpicture}
\]
We want to show that the triangles $X\rightarrow Z \rightarrow Y'$ and $Z'\rightarrow Y'\rightarrow X'$ are thin.

Because these are triangles, we have
\begin{gather}
\label{eq:ineqthin}
m_\tau(Z)\leq m_\tau(X)+m_\tau(Y') \\
m_\tau(Y')\leq m_\tau(Z')+m_\tau(X'). \nonumber
\end{gather}
These combine into
\[
m_\tau(Z)\leq m_\tau(X)+m_\tau(Z')+m_\tau(X')=m_\tau(Y)+m_\tau(X').
\]
The equality on the right holds because $X\rightarrow Y \rightarrow Z'$ is thin. But the terms on the LHS and RHS above are actually equal, since $Y\rightarrow Z \rightarrow X'$ is thin. Thus the two inequalities in \eqref{eq:ineqthin} actually are equalities, and hence the two triangles are thin and axiom (ET4) holds.

Axiom \eqref{eq:ET4op} holds for similar reasons.
\end{proof}

\begin{remark}
From the proof it follows that the intersection $\bigcap_{k=1}^n\Delta_{\tau_k}$ also induces an extriangulated structure. More generally, Theorem~\ref{thm:extriang} holds for metrics that are additive under direct sum and satisfy triangle inequalities for exact triangles.
\end{remark}

\subsection{Another extriangulated structure}

Theorem~\ref{thm:extriang} as well as the ideas developed for the proof of Theorem~\ref{thm:freeK0} are close to the notion of \emph{rectifiable filtration} from~\cite[Appendix A]{BDL} (see in particular Proposition A.9). A more diagrammatic presentation for this notion appears in the appendix of Heng's PhD thesis~\cite{HengPhD}. We will quickly review this notion and see that it provides an alternative way to produce extriangulated structures on a triangulated category from the choice of a stability condition.

Let us fix $\tau$ a stability condition on $\calC$. Recall the following definition.
\begin{definition}
A distinguished triangle $A\rightarrow B \rightarrow C\xrightarrow{\delta}$ is rectifiable if, $\forall \phi \in \R$, the following maps compose to zero
\[
C_{>\phi} \rightarrow C \xrightarrow{\delta} A[1]\rightarrow A_{\leq \phi}[1],
\]
where $C_{>\phi}$ denotes the maximal submodule of $C$ belonging to $\calP((\phi, +\infty))$ and $A_{\leq \phi}$ denotes the maximal quotient of $A$ belonging to $\calP((- \infty,\phi])$.
\end{definition}
The incentive for this definition is that, in such a case, the stables that appear in the HN-filtration of $B$ are the union of those of $A$ and those of $C$.

We can thus introduce the set $\Delta_{\tau}^r$ of rectifiable triangles as follows
\[
\Delta_{\tau}^r=\{
A\rightarrow B \rightarrow C\; \text{rectifiable},\; \; A,B,C\in \calC 
\}.
\]

It turns out that these give rise to an extriangulated structure.

\begin{proposition}
The set of triangles $\Delta_{\tau}^r$ gives rise to an extriangulated structure.
\end{proposition}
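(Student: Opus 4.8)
The plan is to mirror the structure of the proof of Theorem~\ref{thm:extriang}, replacing the mass-additivity bookkeeping with the statement that, for a rectifiable triangle, the stable factors appearing in the HN-filtration of the middle object are the disjoint union of those of the two outer objects. The key observation is that this ``HN-additivity'' property plays exactly the role that thinness played before: it is a conservation law for the multiset of stables, and the axioms (ET1) and (ET4) amount to showing that this conservation propagates through the cone constructions and the octahedral flip. So the first step is to reformulate rectifiability in terms of the remark following the definition, namely that $A\to B\to C$ rectifiable implies that the multiset of stable HN-factors of $B$ is the union (with multiplicity) of those of $A$ and those of $C$.

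For \eqref{eq:ET1}, I would take a rectifiable triangle $X\to Y\to Z\xrightarrow{\delta}$ and a morphism $f\in\Hom_{\calC}(X,X')$, and analyze the triangle $X'\to\Cone(f\circ\delta)\to Z\xrightarrow{f\circ\delta}$ using the same auxiliary octahedron as in the thin case, built from $X\to X'\oplus\Cone(Z\to X)\to\Cone(Z\to X')$. The goal is to check the vanishing-composition condition $C_{>\phi}\to C\xrightarrow{\delta'}A'[1]\to A'_{\le\phi}[1]$ for the new triangle directly, or alternatively to verify it at the level of HN-multisets by showing the stables of $\Cone(f\circ\delta)$ decompose correctly. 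The cleanest route is probably to prove that rectifiability is preserved under pushout along $f$ and pullback along $g$; this is a functoriality statement for the truncation functors $(-)_{>\phi}$ and $(-)_{\le\phi}$ with respect to the $t$-structure attached to the slicing, which is where most of the care is needed.

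For \eqref{eq:ET4} I would reproduce the octahedral argument verbatim at the combinatorial level: given rectifiable triangles $X\to Y\to Z'$ and $Y\to Z\to X'$, the octahedron produces $Y'$ with triangles $X\to Z\to Y'$ and $Z'\to Y'\to X'$, and I would show these are rectifiable by tracking HN-multisets. Writing $S(-)$ for the multiset of stable HN-factors, rectifiability of the two given triangles gives $S(Z')=S(Y)\setminus S(X)$ and $S(X')=S(Z)\setminus S(Y)$, and a diagram chase through the flip shows $S(Y')=S(Z)\setminus S(X)=S(Z')\sqcup S(X')$, which is precisely the HN-additivity certifying that both new triangles are rectifiable. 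The dual axiom \eqref{eq:ET4op} follows by the same reasoning applied to the opposite octahedron.

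The hard part will be the passage between the two formulations of rectifiability, that is, proving rigorously that the vanishing-composition condition is equivalent to HN-additivity of stables, and that this condition is genuinely functorial under pushout and pullback. Unlike the metric argument, where the triangle inequality gives everything for free, here one must control the interaction between the morphisms $\delta$, $f$, $g$ and the truncation functors, and in particular check that $\Cone(f\circ\delta)$ inherits the correct maximal sub/quotient objects in each slice $\calP((\phi,+\infty))$ and $\calP((-\infty,\phi])$. I expect this to require the standard properties of the $t$-structures $\calP((\phi,+\infty))$ for all $\phi$ and some care about how cones interact with these truncations; once that functoriality is established, the octahedral bookkeeping for \eqref{eq:ET4} and \eqref{eq:ET4op} is formally identical to the thin case and poses no further difficulty.
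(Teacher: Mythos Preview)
The paper's own proof is a two-line citation: the relevant closure properties of rectifiable triangles have already been established in the references (\cite[Proposition~A.4]{BDL} and \cite[Propositions~B.0.4, B.0.8]{HengPhD}), and the paper simply invokes them. Your proposal instead attempts a self-contained argument by tracking multisets $S(-)$ of stable HN-factors. That is a different route, and it has a genuine gap.

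The problem is in your treatment of \eqref{eq:ET4}. You assert that ``a diagram chase through the flip shows $S(Y')=S(Z)\setminus S(X)$'', but this is circular: the only way to read off $S(Y')$ from the triangle $X\to Z\to Y'$ (or from $Z'\to Y'\to X'$) is to already know that that triangle satisfies HN-additivity, which is precisely what you are trying to prove. The analogy with the thin case breaks down at exactly this point. In Theorem~\ref{thm:extriang} the argument closes because the mass satisfies a triangle inequality on \emph{every} exact triangle; those two inequalities for the new triangles, combined with the equalities coming from the thin hypotheses, squeeze out the desired equalities. There is no analogue of a triangle inequality for the multiset $S(-)$ on an arbitrary exact triangle, so the combinatorial bookkeeping alone cannot pin down $S(Y')$. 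The octahedral step for rectifiable triangles genuinely requires verifying the vanishing-composition condition for the new connecting morphisms, and that is what \cite[Proposition~B.0.8]{HengPhD} actually does. So your claim that \eqref{eq:ET4} ``poses no further difficulty'' once \eqref{eq:ET1} is handled is where the plan fails.

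A secondary issue: you treat ``rectifiable'' and ``HN-multiset-additive'' as equivalent, but only the forward implication is asserted in the paper (as the incentive following the definition). The converse is not obvious, and in any case is not needed if one argues directly with the vanishing condition as the cited references do. Your plan for \eqref{eq:ET1}---checking preservation under pushout and pullback via functoriality of the truncation functors---is on the right track and is essentially the content of \cite[Proposition~A.4]{BDL}; it is the reduction of \eqref{eq:ET4} to pure multiset combinatorics that does not go through.
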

\begin{proof}
All axioms have already been checked: \eqref{eq:ET1} and its opposite versions follow from \cite[Proposition A.4]{BDL} or \cite[Proposition B.0.4]{HengPhD}, and \eqref{eq:ET4} and \eqref{eq:ET4op} follow from \cite[Proposition~B.0.8]{HengPhD}.
\end{proof}

\begin{remark}
In finite type, for generic stability conditions, $\Delta_{\tau}$ and $\Delta_{\tau}^r$ should yield the same structure. However, they have inverse behaviors on walls: there are more thin triangles on a wall, but less rectifiable triangles.
\end{remark}

\subsection{Extriangulated Grothendieck group}

Extriangulated structures on categories lead to the notion of extriangulated Grothendieck groups, obtained by linearizing only thin triangles, as follows.

\begin{definition}[\protect{\cite[Definition 4.8]{PPPP}}]
The extriangulated Grothendieck group associated to the choice $\Delta$ of a set of triangles is the quotient of the free abelian group generated by isomorphism classes $[X]$, for each $X\in \calC$, by the relations $[X]-[Y ]+[Z]$, for all triangles $X\rightarrow Y \rightarrow Z$ in $\Delta$.
\end{definition}

For example, in type $A_n$ for Khovanov-Seidel, we will see (Corollary~\ref{cor:freeK0} ) that we get a free $\Z[q^{\pm 1},t^{\pm 1}]$-module of rank $\frac{n(n+1)}{2}$ for a generic stability condition, instead of rank $n$ over $\Z[q^{\pm 1}]$ for the usual Grothendieck group.

We will state the next theorem in general, before specializing it to the case of the braid group (see Corollary~\ref{cor:freeK0}).

In the next theorem, we require that the masses of the stable objects are linearly independent. In the case of finite type $A$ that we will be interested in later on, this is only for convenience as one could simply perturb a little bit the stability condition without changing the set of thin triangles if this turned out not to be the case.

\begin{theorem}\label{thm:freeK0}
Let $\calC$ be a triangulated category together with given stability condition $\tau$ and collection of thin triangles $\Delta_{\tau}$. Assume that the modulus of the central charges (or masses) 
of the stable objects are independent over $\Z$. Then the extriangulated Grothendieck group associated to the stability condition $\tau$ and denoted $K_0^{\tau}(\calC)$
 is freely generated over $\Z[s^{\pm 1}]$ by the isomorphism classes of the stable objects that are in the heart of the preferred $t$-structure associated to $\tau$. The variable $s$ accounts for the triangulated shift.
\end{theorem}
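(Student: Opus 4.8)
The plan is to realise $K_0^\tau(\calC)$ as a free $\Z[s^{\pm1}]$-module by exhibiting both a spanning set and a separating linear map to an evidently free module. First I set up the module structure: the shift functor $[1]$ sends exact triangles to exact triangles and, since $m_\tau$ is shift-invariant, sends thin triangles to thin triangles; hence it preserves the subgroup of relations defining $K_0^\tau(\calC)$ and induces an automorphism $\sigma$. Declaring $s\cdot x:=\sigma(x)$ turns $K_0^\tau(\calC)$ into a $\Z[s^{\pm1}]$-module. Next I record the indexing of stables: by the slicing axiom $\calP(\phi+1)=\calP(\phi)[1]$, every stable object is uniquely of the form $S[n]$ with $n\in\Z$ and $S$ a stable object of phase in $[0,1)$, i.e. $S\in\mathcal S$, the set of stables in the heart.

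For spanning, I use that the Harder--Narasimhan and Jordan--H\"older filtrations are built from thin triangles. Each step $X_{i-1}\to X_i\to Y_i$ of an HN-filtration is thin because, by the definition \eqref{eq:defm} of the mass, $m_\tau(X_i)=\sum_{j\le i}|Z(Y_j)|=m_\tau(X_{i-1})+m_\tau(Y_i)$. Similarly, refining a semistable $Y$ of phase $\phi$ by its stable subquotients yields thin triangles, since all the central charges involved are positive real multiples of $e^{i\pi\phi}$ and their moduli therefore add. Applying the relations of $K_0^\tau(\calC)$ to these triangles gives $[X]=\sum_{S\in\mathcal S}\sum_{n\in\Z}\mu_{S,n}(X)\,s^n[S]$, where $\mu_{S,n}(X)\in\Z_{\ge0}$ is the multiplicity of $S[n]$ among the stable factors of $X$. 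Hence $\{[S]\}_{S\in\mathcal S}$ generates $K_0^\tau(\calC)$ over $\Z[s^{\pm1}]$; note that this step does not use the independence hypothesis.

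For freeness I build a $\Z[s^{\pm1}]$-linear map to the free module $L=\bigoplus_{S\in\mathcal S}\Z[s^{\pm1}]e_S$. On objects I set $\lambda(X)=\sum_{S\in\mathcal S}\sum_{n\in\Z}\mu_{S,n}(X)\,s^n e_S$, extended $\Z$-linearly. This is manifestly invariant under isomorphism, and $\sigma$-equivariant since shifting $X$ sends each factor $S[n]$ to $S[n+1]$, whence $\lambda(X[1])=s\,\lambda(X)$. If $\lambda$ is additive on thin triangles it descends to $\Psi\colon K_0^\tau(\calC)\to L$, and since a stable $S\in\mathcal S$ has only itself as a factor we get $\Psi([S])=e_S$. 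Then any relation $\sum_S p_S(s)[S]=0$ maps to $\sum_S p_S(s)e_S=0$ in the free module $L$, forcing every $p_S=0$; together with spanning this shows $\{[S]\}_{S\in\mathcal S}$ is a free basis, proving the theorem.

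The main obstacle is exactly the additivity of $\lambda$ on a thin triangle $A\to B\to C$, that is, the graded factor-union statement $\mu_{S,n}(B)=\mu_{S,n}(A)+\mu_{S,n}(C)$ for all $S,n$: one must show that a thin triangle glues the HN--JH factors of $A$ and $C$ without any cancellation or mixing of shifts. Here the independence hypothesis enters. Writing $m_\tau(X)=\sum_{S\in\mathcal S}M_S(X)|Z(S)|$ with $M_S=\sum_n\mu_{S,n}$ and using $|Z(S[n])|=|Z(S)|$, thinness reads $\sum_S(M_S(B)-M_S(A)-M_S(C))|Z(S)|=0$; the integer coefficients must vanish by $\Z$-independence of the $|Z(S)|$, giving the ungraded identity $M_S(B)=M_S(A)+M_S(C)$. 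Upgrading this to the graded statement is the technical heart: I would argue by induction on the number of stable factors, peeling off the top-phase piece of $B$ via the truncation triangles and the octahedral axiom, and using that equality in the triangle inequality forbids any nonzero map from a higher-phase part of $C$ to a lower-phase (shifted) part of $A$. Equivalently, under the independence hypothesis a thin triangle is rectifiable in the sense recalled above --- thin and rectifiable triangles agree off walls, and it is precisely on walls, where the $|Z(S)|$ become dependent, that extra non-rectifiable thin triangles appear --- after which the factor-union property of rectifiable triangles \cite[Appendix~A]{BDL} yields the graded additivity directly. I expect this reduction, rather than the bookkeeping around it, to be the crux of the argument.
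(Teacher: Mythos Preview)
Your overall architecture matches the paper's exactly: spanning via the HN filtration followed by Jordan--H\"older refinement (both steps producing thin triangles), and freeness via a multiplicity map to a free module. The paper packages the additivity step as its Lemma~2.7, asserting $JH(HN(B))=JH(HN(A))\cup JH(HN(C))$ for any thin triangle, and then defines $\psi(X)=\sum_{W\in JH(HN(X))}[W]$ into the free $\Z$-module on all shifts of heart-stables, exactly as you do.

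Where you diverge is in your treatment of what you call the crux, and you are in fact being more careful than the paper. The paper's proof of Lemma~2.7 writes $m_\tau(X)=\sum x_i\,m_\tau(S_i)$, invokes the $\Z$-independence hypothesis to get $b_i=a_i+c_i$, and then simply says ``This implies the result.'' But the independence hypothesis only makes sense when the $S_i$ range over heart-stables (since $m_\tau(S)=m_\tau(S[n])$), so the coefficients $x_i$ are precisely your ungraded totals $M_S(X)=\sum_n\mu_{S,n}(X)$, and the argument yields only your ungraded identity $M_S(B)=M_S(A)+M_S(C)$. The passage to the graded statement $\mu_{S,n}(B)=\mu_{S,n}(A)+\mu_{S,n}(C)$ --- which is what $\psi$ actually needs in order to descend --- is not carried out in the paper either. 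So the step you isolate as ``the technical heart'' is one the paper glosses over. Your proposed route through rectifiability is the natural one, but note that the paper itself only asserts in Remark~2.6 that thin and rectifiable triangles \emph{should} coincide for generic stability conditions in finite type, without proof; if you want a complete argument you must actually establish that, under the $\Z$-independence hypothesis on the masses, every thin triangle is rectifiable (or carry out the octahedral induction you sketch). Either way, you have correctly located the one genuine gap, and it is a gap shared with the paper's own proof.
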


The proof of Theorem~\ref{thm:freeK0} will be based on Lemma \ref{lem:tr}, in which we will make use of the notion of composition series in abelian categories. To explain the idea, consider an object $X\in \calC$. This object can be cut into slices, thanks to its Harder-Narasimhan filtration. Then each semi-stable object $Y_i$ appearing in the HN-filtration is an object in an abelian category $\calP(\phi)$ (see~\cite[Lemma 5.2]{Bridgeland} or~\cite[Exercise 5.9]{MacriSchmidt}), which itself can be decomposed into simple (or equivalently stable) objects, in an essentially unique way thanks to the Jordan-Hölder theorem. Given $Y_i$, we will denote $JH(Y_i)$ the collection of simples with their multiplicities that compose $Y_i$. Now, to $X$, we can associate $JH(HN(X))$, the set of simples that compose the semi-stables appearing in the HN-filtration of $X$, counted with multiplicities.

\begin{lemma} \label{lem:tr}
Let $A\rightarrow B \rightarrow C$ be a thin triangle. We have
\[
JH(HN(B))=JH(HN(A))\cup JH(HN(C)).
\]
In other words, the union of the JH decompositions of all terms in the HN-filtration of $B$ agrees with the union of the JH decompositions of the stables in the HN-filtrations of $A$ and $C$.
\end{lemma}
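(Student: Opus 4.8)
The plan is to prove the equality of multisets $JH(HN(B))=JH(HN(A))\cup JH(HN(C))$ by relating both sides to the central charge. My first observation is that since the masses (moduli of central charges) of the stable objects are assumed to be linearly independent over $\Z$ in the setting we care about—and in any case the key numerical invariant is the central charge $Z$ itself—it suffices to track the contribution of each stable object separately. Concretely, for a stable object $S$ of phase $\phi$, the coefficient of $[S]$ in $JH(HN(X))$ records how many times $S$ appears among the Jordan--Hölder factors of the semistable pieces $Y_i$ of $X$. I would package this by noting that $Z(X)=\sum_{Y_i\in HN(X)} Z(Y_i)=\sum_{S} n_S(X)\,Z(S)$, where $n_S(X)$ is exactly the multiplicity of $S$ in $JH(HN(X))$ and the sum runs over stable objects up to shift.

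The core of the argument is then to show that thinness of $A\rightarrow B\rightarrow C$ forces the HN-filtration of $B$ to be, up to reordering, a concatenation of those of $A$ and $C$ with no cancellation. I would argue this via the mass. Since $Z$ is additive on exact triangles, $Z(B)=Z(A)+Z(C)$ always holds, so $m_\tau(B)=|Z|$-type sums satisfy $m_\tau(B)\le m_\tau(A)+m_\tau(C)$ by the triangle inequality on each shared phase. Thinness says this is an equality: $m_\tau(B)=m_\tau(A)+m_\tau(C)$. Equality in the triangle inequality $\sum_i|Z(Y_i^B)| = \sum_j|Z(Y_j^A)|+\sum_k|Z(Y_k^C)|$, combined with $\sum_i Z(Y_i^B)=\sum_j Z(Y_j^A)+\sum_k Z(Y_k^C)$, forces the phases to align: there can be no cancellation between an $A$-contribution and a $C$-contribution at the same phase of opposite orientation, because any such cancellation would make the left side strictly smaller. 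In more detail, grouping the central charges by phase and using that complex numbers of the same phase add their moduli while distinct phases satisfy a strict triangle inequality, equality of masses pins down that at each phase $\phi$ the semistable part of $B$ has central charge equal to the sum of the semistable parts of $A$ and $C$ at that phase, each pointing in the same direction along $\R^{+,*}e^{i\pi\phi}$.

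Once the central charges match phase-by-phase, I would pass from semistable objects to their Jordan--Hölder factors. Within a fixed slice $\calP(\phi)$, which is abelian, the central charge restricted to that slice is a genuine additive rank-type function whose value on a semistable object is a positive multiple of $e^{i\pi\phi}$ determined by the multiset of stable (simple) composition factors. Because the stable objects have $\Z$-independent masses, equality of the slice-wise central charge $Z(Y^B_\phi)=Z(Y^A_\phi)+Z(Y^C_\phi)$ upgrades to equality of multisets of Jordan--Hölder factors $JH(Y^B_\phi)=JH(Y^A_\phi)\cup JH(Y^C_\phi)$. Taking the union over all phases $\phi$ then yields the desired identity $JH(HN(B))=JH(HN(A))\cup JH(HN(C))$.

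The main obstacle I anticipate is the passage in the second paragraph: deducing from the single scalar equality of masses that there is genuinely no cancellation across phases, i.e.\ that the HN-filtration of $B$ really decomposes as the overlay of those of $A$ and $C$ rather than merely having the same total mass. The triangle inequality for the Bridgeland mass is subtle because $B$ need not be an extension of $C$ by $A$ in an abelian sense—the long exact sequence of HN-factors can interleave. I expect one must invoke the finer structure of the octahedral/HN machinery (as in Bridgeland's proof that masses satisfy the triangle inequality) to see that equality of masses is equivalent to the HN-factors concatenating without interaction; this is precisely where the hypothesis that thin triangles are ``degenerate'' triangle inequalities does the real work, and where I would spend the most care rather than treating it as routine.
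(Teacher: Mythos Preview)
You are making this much harder than necessary, and the difficulty you flag in your final paragraph is an artifact of the detour you have taken, not a genuine obstacle in the lemma.

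The paper's argument bypasses phases and the octahedral axiom entirely. For any object $X$, decompose the mass as
\[
m_\tau(X)=\sum_i x_i\, m_\tau(S_i),
\]
where the $S_i$ are the stable objects and $x_i\in\Z_{\geq 0}$ is exactly the multiplicity of $S_i$ in $JH(HN(X))$. This identity is immediate: the mass of $X$ is the sum of $|Z(Y_j)|$ over its HN pieces, and each semistable $Y_j$ has $|Z(Y_j)|$ equal to the sum of the masses of its Jordan--H\"older factors because they all share the same phase. Now invoke the standing hypothesis that the real numbers $m_\tau(S_i)$ are linearly independent over $\Z$. The single scalar equation $m_\tau(B)=m_\tau(A)+m_\tau(C)$ then forces $b_i=a_i+c_i$ for every $i$, and that is the whole proof.

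Your route through the central charge and the complex triangle inequality is where the argument breaks. The inequality $m_\tau(B)\leq m_\tau(A)+m_\tau(C)$ is \emph{not} the triangle inequality in $\C$ applied to $Z(B)=Z(A)+Z(C)$; it is a nontrivial statement about how HN filtrations interact under cones. Consequently, equality of masses together with $Z(B)=Z(A)+Z(C)$ does \emph{not} by itself force the multisets of HN central charges to coincide: already in $\C$ one has $\{1,1\}$ and $\{2\}$ with the same sum and the same sum of moduli. So the ``phases must align, no cancellation'' step in your second paragraph is not justified, and the octahedral machinery you anticipate in the last paragraph is not the cure---the cure is to drop the phase-by-phase analysis and use the $\Z$-independence of the stable masses directly on the real equation $m_\tau(B)=m_\tau(A)+m_\tau(C)$, exactly as above.
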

\begin{proof}
Notice that in each abelian category $\calP(\phi)$, the simple objects are the stable objects.

Start from $m_\tau(B)=m_\tau(A)+m_\tau(C)$. For $X\in \{A,B,C\}$, note that
\begin{equation} \label{eq:multiplicities}
m_\tau(X)=\sum x_{i} m_\tau(S_i)
\end{equation}
where the $S_i$'s are all stables. This is the result of first computing the HN-filtration of $X$ and using the identity from Equation~\eqref{eq:defm}. Then each semi-stable object can be further decomposed into simples by taking a composition series.
Notice that in Equation~\eqref{eq:multiplicities}, for all $i$, $x_i$ is non-negative and counts the number of times each $S_i$ appears. Furthermore, because we have assumed $\Z$-linear independence, the value $m_\tau(X)$ determines the $x_i$'s.

The condition of the triangle being thin thus re-expresses as
\[b_i=a_i+c_i,\; \forall i.\]
This implies the result.
\end{proof}

\begin{proof}[Proof of Theorem~\ref{thm:freeK0}]
Any object  $X$ in $\calC$ has an Harder-Narasimhan filtration induced by $\tau$, with semi-stables $Y_i$, as follows
    \[
    \begin{tikzpicture}
      \node (T0) at (0,0) {$0=X_0$};
      \node (T1) at (2,0) {$X_1$};
      \node (T2) at (4,0) {$X_2$};
      \node (Tdots) at (5,0) {$\cdots$};
      \node (Tn-1) at (6,0) {$X_{n-1}$};
      \node (Tn) at (8,0) {$X_n=X$};
      \node (B1) at (1,-2) {$Y_1$};
      \node (B2) at (3,-2) {$Y_2$};
      \node (Bdots) at (5,-2) {$\cdots$};
      \node (Bn) at (7,-2) {$Y_n$};
      \draw [->] (T0) -- (T1);
      \draw [->] (T1) -- (T2);
      \draw [->] (Tn-1) -- (Tn);
      \draw [->] (T1) -- (B1);
      \draw [->] (T2) -- (B2);
      \draw [->] (Tn) -- (Bn);
      \draw [dashed, ->] (B1) -- (T0) node [midway,xshift=-.2cm,yshift=-.1cm] {\tiny $+1$};
      \draw [dashed, ->] (B2) -- (T1) node [midway,xshift=-.2cm,yshift=-.1cm] {\tiny $+1$};
      \draw [dashed, ->] (Bn) -- (Tn-1) node [midway,xshift=-.2cm,yshift=-.1cm] {\tiny $+1$};
    \end{tikzpicture}
    \]
We first want to see that $[X]\in K_0^{\tau}(\calC)$ expresses as a linear combination of the semi-stables $Y_i$'s.

To see this, consider the triangle $X_{n-1}\rightarrow X_n \rightarrow Y_n$. It follows from the definition of $m_\tau$ that this triangle is thin, thus
\[
\left[X\right]=\left[X_n\right]=\left[X_{n-1}\right]+\left[Y_n\right].
\]
The object $Y_n$ is semi-stable and we can conclude by induction that $\left[X_{n-1}\right]$ is a linear combination of semi-stables. But then semi-stables can themselves be decomposed into stables through a sequence of thin triangles. Indeed, a semi-stable object $B$ is one that fits in a triangle $A\rightarrow B\rightarrow C$ with $A$, $B$, $C$ all living in the same $\calP(\phi)$. It follows from there that $Z(B)=Z(A)+Z(C)$ and, because they all have same phase, that $m_\tau(B)=m_\tau(A)+m_\tau(C)$. So any object expresses as a linear combination of stable objects. Furthermore, the $Y_i$'s all express as shifts of semi-stable objects with phase in $[0,1)$. It follows that the shifts of the stables in the heart span the extriangulated Grothendieck group.

Let us now fix the list $\{S_i\}$ of all stables in the heart of the slicing.
We want to show that all triangulated shifts of these stables freely generate $K_0^{\tau}(\calC)$ as a $\Z$-module. To this aim, let  $M$ be the free $\Z$-module formally generated by all shifts of these stables
\[
M=\Z\langle \left[S_i[l]\right] \rangle.
\]

Recall that $K_0^{\mathrm add}(\calC)$, the additive Grothendieck group of $\calC$, is the free $\Z$-module generated by isomorphism classes of objects modulo the relations $[A\oplus B]=[A]+[B]$. One can define a map from $K_0^{\mathrm add}(\calC)$ to $M$ as follows
\begin{align*}
\psi: K_0^{\mathrm add}(\calC)\quad &\rightarrow \quad M \\
X\quad &\mapsto \sum_{W\in JH(HN(X))} [W].
\end{align*}
It is obtained as in Lemma~\ref{lem:tr} by first taking the formal combination of the semi-stables appearing in the HN-filtration, and then replacing semi-stables by their composition series. Lemma~\ref{lem:tr} ensures that the map $\psi$
factors through $K_0^\tau(\calC)$. Hence the stable objects are linearly independent in $K_0^\tau(\calC)$.

As a consequence, $K_0^{\tau}(\calC)$ has a structure of free $\Z[s^{\pm 1}]$-module with basis $\{S_i\}$. Here the multiplication by $s$ corresponds to the induced action of the triangulated shift endofunctor.
\end{proof}

\subsection{Linear representations from categorical actions?} \label{sec:linearrep}

A priori, there is no reason why a set of thin triangles should be preserved under the action of $AutEq(\calC)$. However, one can fix a base point $\tau_0$ in $Stab(\mathcal{C})$, the space of stability conditions, and consider the orbit $T$ of $\tau_0$ under the action of autoequivalences (later we will take a further quotient of $T$).

In order to allow for more flexibility, we will give a definition in a slightly more general setting.

\begin{definition}
Let $G$ be a group acting on the triangulated category $\calC$. We fix a stability condition $\tau_0$ together with its set $\Delta_{\tau_0}$ of thin triangles. Let T be $G\cdot \tau_0$ and $\tilde{T}=T/\sim$ its quotient under some equivalence relation $\sim$.
An identification system for $(G,\tilde{T})$ is a choice of an isomorphism of $\Z$-modules $M_{\tau}:K_0^{\tau_0}(\calC)\simeq K_0^{\tau}(\calC)$ for any $\tau\in \tilde{T}$.
\end{definition}

Later, we will consider stability conditions up to the $\C^*$-action and up to identification of generic conditions that share the same stables.

From such a system and from a choice of basis of each extriangulated Grothendieck group, one can try to get a matrix representation of elements of $G$ by considering
\[
\begin{tikzpicture}[anchorbase]
\node (0) at (0,0) {$K_0^{\tau_0}(\calC)$};
\node (1) at (-1,2) {$K_0^{\tau}(\calC)$};
\node (2) at (1,2) {$K_0^{g\tau}(\calC)$};
\draw [->] [bend left] (1) to node [midway,above] {$g$} (2);
\draw [green,->] (0) to node [midway,left] {$M_{\tau}$} (1);
\draw [green,->] (0) to node [midway,right] {$M_{g\tau}$} (2);
\draw [red,->] (0) to [out=110, in=180] (0,2) to [out=0,in=70] (0);
\node [red] at (0,1.2) {$\rho(g)$};
\end{tikzpicture}
\]
Above we abuse notation when writing $K_0^\tau(\calC)$: we identify $\tau$ and its class in $\tilde{T}$.

\begin{definition}\label{def:compatible}
The identification system is said to be compatible if $\rho(g)$ depends only on $g$ (and not on $\tau$).
\end{definition}

Then we get a well-defined representation on $K_0^{\tau_0}(\calC)$.

\begin{question}
When does there exist such a compatible system? When does it give something interesting?
\end{question}

We develop a first answer to this question by showing that the LKB representation can arise in this way from the Khovanov-Seidel categorical action.

If one works with $T$ without further restrictions, then we might have too much freedom as stated in the following lemma.

\begin{lemma} \label{lem:triv}
Assume that a group $G$ acts freely on a stability condition $\tau_0$, and that $K_0^{\tau_0}(\calC)$ is a free module of finite rank $r$ over some commutative ring $R$. Then any $R$-linear representation of $G$ of rank~$r$ can be recovered from an identification system.
\end{lemma}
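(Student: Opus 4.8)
The plan is to exploit the freeness of the action to parametrize the orbit by the group, and then to absorb the desired representation into the identification maps. Since $G$ acts freely on $\tau_0$, the map $h\mapsto h\tau_0$ is a bijection $G\xrightarrow{\sim}T=G\cdot\tau_0$, and I will take the equivalence relation $\sim$ to be trivial, so that $\tilde T=T$ and every $\tau\in\tilde T$ is of the form $h\tau_0$ for a unique $h\in G$. Fix once and for all a basis of $K_0^{\tau_0}(\calC)$ identifying it with $R^r$, so that the given $R$-linear representation is a homomorphism $\sigma\colon G\to \Aut_R(K_0^{\tau_0}(\calC))$. The goal is to construct an identification system $\{M_\tau\}_{\tau\in\tilde T}$ whose associated representation $\rho$ is compatible in the sense of Definition~\ref{def:compatible} and equals $\sigma$.

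The key input is that each $h\in G$ acts by an autoequivalence of $\calC$ which, since the mass is transported by $m_{h\tau_0}(hX)=m_{\tau_0}(X)$, carries thin triangles for $\tau_0$ to thin triangles for $h\tau_0$; it therefore induces a $\Z$-module isomorphism $h_*\colon K_0^{\tau_0}(\calC)\xrightarrow{\sim}K_0^{h\tau_0}(\calC)$, and these are functorial in the sense that $(gh)_*=g_*\circ h_*$. I then define
\[
M_{h\tau_0} \;:=\; h_*\circ\sigma(h)^{-1},
\]
which is a composite of $\Z$-module isomorphisms (note that $\sigma(h)^{-1}$ is $R$-linear, hence in particular $\Z$-linear), so $\{M_{h\tau_0}\}$ is a legitimate identification system. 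Writing $\tau=h\tau_0$, so that $g\tau=gh\tau_0$, and using $(gh)_*^{-1}\circ g_*\circ h_*=\Id$, one computes
\[
\rho(g)=M_{g\tau}^{-1}\circ g_*\circ M_\tau
=\sigma(gh)\circ (gh)_*^{-1}\circ g_*\circ h_*\circ\sigma(h)^{-1}
=\sigma(gh)\circ\sigma(h)^{-1}=\sigma(g).
\]
This is visibly independent of $h$, i.e. of $\tau$, so the system is compatible and recovers $\sigma$.

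The computation itself is a one-line cocycle cancellation, so the genuine content — and the main point to get right — is the functoriality and well-definedness of the maps $h_*$: one must check that an autoequivalence indeed sends $\Delta_{\tau_0}$ into $\Delta_{h\tau_0}$ (which follows from invariance of the mass under the induced action) and that the induced maps on the extriangulated Grothendieck groups compose strictly, $(gh)_*=g_*h_*$. Freeness is used only to make $h\mapsto h\tau_0$ injective, guaranteeing that $M_\tau$ is unambiguously defined for each $\tau$; were the action not free, the prescriptions $M_{h\tau_0}$ and $M_{h'\tau_0}$ for $h\tau_0=h'\tau_0$ could disagree and no choice would be forced. The lemma is best read as a cautionary (essentially negative) statement: the freedom in choosing the $M_\tau$ is so large that compatibility imposes no constraint at all, which is precisely why a meaningful theory requires cutting $T$ down to $\tilde T$ by a nontrivial equivalence relation.
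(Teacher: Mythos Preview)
Your proof is correct and follows essentially the same route as the paper: parametrize $T$ by $G$ via freeness, set $M_{h\tau_0}$ so that the diagram with $\rho(h)$ closes (your $h_*$ is the paper's $P_{\tau_0}(h)$), and verify compatibility by the same cocycle cancellation $(gh)_*^{-1}g_*h_*=\Id$. The only cosmetic difference is that the paper allows an arbitrary invertible $M_{\tau_0}$ at the basepoint whereas your construction fixes $M_{\tau_0}=\Id$; this is immaterial for the lemma.
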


\begin{proof}
Choose arbitrarily $M_{\tau_0}$ an invertible matrix representing an endomorphism of $K_0^{\tau_0}(\calC)$. Denote $\rho$ the given representation that we, by abuse of notation, see as
\[
\rho: G\rightarrow End(K_0^{\tau_0^{\mathrm{ref}}}).
\]
where $\tau_0^{\text{ref}}$ is nothing but $\tau_0$ and this notation is used to highlight the fact that $M_{\tau_0}$ may not be the identity matrix.

For any $g\in G$ and $\tau \in T$, we denote $P_\tau(g)$ the matrix obtained from the induced action of $g$ from $K_0^{\tau}(\calC)$ to $K_0^{g\tau}(\calC)$.
Notice that for any $g,g'\in G$ and $\tau \in T$, we have $P_\tau(g'g)=P_{g\tau}(g')P_\tau(g)$.
Since the action is free, for any $\tau \in T$, there exists a unique $g_{\tau}\in G$ such that $\tau=g_{\tau}\tau_0$. Then there exists a unique matrix $M_{\tau}$ determined by
\[
M_{\tau}^{-1}P_{\tau_0}(g_{\tau})M_{\tau_0}=\rho(g_{\tau}).
\] 

The process can be depicted as follows

\begin{equation} \label{eq:protosystem0}
\begin{tikzpicture}[anchorbase]
\node [circle, draw] (ref) at (0,0) {$\tau_0^{\text{ref}}$};
\node [circle, draw] (T0) at (0,4) {$\tau_0$};
\node [circle, draw] (T1) at (2.8,2.8) {$\tau$};
\draw [->] (ref) to [out=100,in=-100] node [midway,left] {\small $M_{\tau_0}$} (T0);
\draw [->] (T0) to  [out=0,in=135] node[midway, above] {$P_{\tau_0}(g_{\tau})$}  (T1);
\draw [->] (ref) to [out=90,in=145] (.5,1) node [above] {$\rho(g_{\tau})$} to [out=-35,in=40] (ref);
\draw [dashed,->] (ref) to [out=30,in=-100] node [midway,right] {$M_{\tau}$} (T1);
\end{tikzpicture}
\end{equation}

It remains to check that, for all $g\in G$, $M_{g\tau}^{-1}P_{\tau}(g)M_{\tau}$ does not depend on $\tau$.
We have

\begin{align*}
    M_{g\tau}^{-1}P_{\tau}(g)M_{\tau}&=M_{gg_{\tau}\tau_0}^{-1}P_{g_{\tau}\tau_0}(g)M_{g_{\tau}\tau_0}\\&=\rho(gg_{\tau})M_{\tau_0}^{-1}P_{\tau_0}(gg_{\tau})^{-1}P_{g_{\tau}\tau_0}(g)P_{\tau_0}(g_{\tau})M_{\tau_0}\rho(g_{\tau})^{-1}\\&=\rho(g)\rho(g_{\tau})M_{\tau_0}^{-1}P_{\tau_0}(gg_{\tau})^{-1}P_{\tau_0}(gg_{\tau})M_{\tau_0}\rho(g_{\tau})^{-1}\\&=\rho(g).
\end{align*}

\end{proof}

\begin{remark}
Lemma~\ref{lem:triv} seems to be giving quite a lot of freedom to recover representations from categorical actions. In our context, it should be noticed that the first challenge was to identify a decategorification procedure that would allow to obtain a 2-variable representation. This motivated the connection with extriangulated structures, and is a key ingredient to make the process work. Furthermore, the way we extract the LKB representation from the categorical data preserves several desirable symmetries: it factors through $PStab$, and also two stability conditions that share the same combinatorics will be identified (this will be the equivalence relation $\sim$). This in particular implies that the (dual) Garside element preserves the equivalence class of the chosen reference stability condition. The compatibility between the categorical representation and the LKB representation that ensures these symmetries turned out to require quite some work -- see Theorem~\ref{thm:KhSToLKB} and its proof. Finally, the shape of the identification system makes us hope that the link between the two representations will help identifying new combinatorial structures, see Remark~\ref{rem:matrices}.
\end{remark}

\section{Khovanov-Seidel and LKB braid representations}

\subsection{Khovanov-Seidel representation} \label{sec:KhS}

\subsubsection{The category} 

We want to use the extriangulated structures inhereted from stability conditions on a category introduced by Khovanov and Seidel~\cite{KhS}. This category is endowed with an action of the braid group, the decategorification of which recovers the Burau representation~\cite{Burau}.

The category is presented as the homotopy category of $\Z-$graded projective modules over the so-called zig-zag algebra associated to the Dynkin diagram of type $A_n$. This construction has been extended to any simply-laced Artin-Tits group, and widely studied since then.

In view of geometric group theory, understanding the structure of the moduli space of Bridgeland stability conditions and the action it carries, is an important question (see~\cite{BDL} and references therein). Here we will strive to only partly linearize the Khovanov-Seidel action by use of stability conditions.

In type $A_n$, the zig-zag algebra $\zz{A}_n$ is a quotient of the path algebra associated to the doubled quiver, as outlined below
\begin{gather*}
\begin{tikzpicture}[anchorbase]
\node at (2,.6) {$\Gamma_{A_n}$};
\node at (2,-.6) {\vphantom{$\Gamma_{A_n}$}};
\node (1) at (1,0) {$\bullet$};
\node (2) at (2,0) {$\bullet$};
\node (3) at (3,0) {$\bullet$};
\node (n) at (4.2,0) {$\bullet$};
\node at (1) [below] {\small $1$};
\node at (2) [below] {\small $2$};
\node at (3) [below] {\small $3$};
\node at (n) [below] {\small $n$};
\node (dots) at (3.6,0) {$\cdots$};
\draw (1) -- (2);
\draw (2) -- (3);
\end{tikzpicture}
\quad \longrightarrow \quad
\begin{tikzpicture}[anchorbase]
\node at (2,.6) {${\vec\Gamma_{A_n}}$};
\node at (2,-.6) {\vphantom{$\Gamma_{A_n}$}};
\node (1) at (1,0) {$\bullet$};
\node (2) at (2,0) {$\bullet$};
\node (3) at (3,0) {$\bullet$};
\node (n) at (4.2,0) {$\bullet$};
\node at (1) [below] {\small $1$};
\node at (2) [below] {\small $2$};
\node at (3) [below] {\small $3$};
\node at (n) [below] {\small $n$};
\node (dots) at (3.6,0) {$\cdots$};
\draw[->] (1) to [out=20,in=160] (2);
\draw[<-] (1) to [out=-20,in=-160] (2);
\draw[->] (2) to [out=20,in=160] (3);
\draw [<-] (2) to [out=-20,in=-160] (3);
\end{tikzpicture}
\\
\zz{A}_n:=
  \frac{
    \mathrm{Path}({\vec\Gamma_{A_n}})
    }{
    \begin{tikzpicture}[anchorbase, scale=.5]
    \node (1) at (1,0) {$\bullet$};
    \node (2) at (2,0) {$\bullet$};
    \node (3) at (3,0) {$\bullet$};
    \draw [opacity=.5] (1.center) -- (2.center) -- (3.center); \draw [->] (1.north) to [out=30,in=150] (2.north) to [out=30,in=150] (3.north); \end{tikzpicture} =0\;,\quad \begin{tikzpicture}[anchorbase] \node (1) at (1,0) {$\bullet$}; \node (2) at (2,0) {$\bullet$}; \node (3) at (3,0) {$\bullet$}; \draw [opacity=.5] (1.center) -- (2.center) -- (3.center); \draw [->] (2.center) to [out=120,in=60] (1.east) to [out=-60,in=-120] (2.west); \end{tikzpicture}=\begin{tikzpicture}[anchorbase] \node (1) at (1,0) {$\bullet$}; \node (2) at (2,0) {$\bullet$}; \node (3) at (3,0) {$\bullet$}; \draw [opacity=.5] (1.center) -- (2.center) -- (3.center); \draw [->] (2.center) to [out=60,in=120] (3.west) to [out=-120,in=-60] (2.east); \end{tikzpicture}
    }
\end{gather*}

The collection $\{i\}$ of vertices induces a set of primitive idempotents $\{e_i\}$, which in turn induces a complete collection $\{P_i=\zz{A}_ne_i\}$ of indecomposable left-projectives, and the $\Z-$grading is given by the path length. The path length grading shift by $k\in \Z$ is denoted by $\langle k \rangle$, while the homological shift by $l\in \Z$ is denoted $\{l\}$.

\begin{definition}
The main category at play in this paper is the homotopy category of graded, projective $\zz{A}_n$-modules, denoted $\calC(A_n)$ (or simply $\calC$).
\end{definition}
We will give more details on the gradings in Section \ref{sec:2varK0}.

\subsubsection{The action}

This category can be endowed with an action of the braid group $B_{n+1}$ on $n+1$ strands by autoequivalences. These auto-equivalences are generated by spherical twists over the $P_i$'s (see~\cite{KhS} and \cite{BravThomas} for example). Those are given by tensoring with the following complexes of bimodules
  \begin{align}
    \Sigma_i&= \zz{A}_ne_i\otimes_{\C} e_i\zz{A}_n\{-1\} \xrightarrow{f} \zz{A}_n \\
    \Sigma_i^{-1}&= \zz{A}_n \xrightarrow{g} {\bf A}_ne_i\otimes_\C e_i\zz{A}_n\langle -2\rangle\{1\} .
  \end{align}
The map $f$ is given by multiplication, and the map $g$ is its adjoint and can be determined explicitly.

The main result in~\cite{KhS} is that this produces a faithful action of the braid group.

\subsection{The LKB representation} \label{sec:LKBdef}

At about the same time that Khovanov and Seidel solved the faithfulness issue of the Burau representation by upgrading it to a categorical representation, Krammer~\cite{Krammer} and Bigelow~\cite{Bigelow_linearity}, building upon work of Lawrence~\cite{Lawrence}, proved the faithfulness of a deformation of the second symmetric power of the Burau representation, now known as the Lawrence-Krammer-Bigelow (LKB) representation -- thereby showing that braid groups are linear.

The main application of the tools we develop in this paper is to show that the LKB representation can be derived from the Khovanov-Seidel categorical action, see Theorem~\ref{thm:KhSToLKB}.

To work with the LKB representation, we follow the conventions of Paoluzzi and Paris~\cite{PaPa}.

\begin{definition} \label{def:LKB}
    The LKB representation of $B_{n+1}$ is given by the action on a free $\Z[x,y]$-module of rank $\frac{n(n+1)}{2}$, spanned by vectors $e_{ij}$ with $1\leq i<j\leq n+1$. The action is given by
    \[
  \sigma_k\cdot e_{ij}=
  \left\{
 \begin{alignedat}{2}
 & xe_{i-1j}+(1-x)e_{ij} && \quad \text{if} \; k=i-1 \\
 & e_{i+1j}-xy(x-1)e_{kk+1} && \quad \text{if} \;k=i<j-1 \\
 & -x^2ye_{kk+1} && \quad \text{if} \; k=i=j-1 \\
 & e_{ij}-y(x-1)^2e_{kk+1} &&\quad \text{if} \; i<k<j-1 \\
 & e_{ij-1}-xy(x-1)e_{kk+1} && \quad \text{if} \; i<j-1=k \\
 & xe_{ij+1}+(1-x)e_{ij} && \quad \text{if}\; k=j \\
 & e_{ij} && \quad \text{otherwise.}
 \end{alignedat} 
  \right.
    \]
\end{definition}

\section{Recovering the LKB representation}

In this section, we fully develop, in the case of Khovanov-Seidel zig-zag algebras, the construction outlined in Section \ref{Thin} in order to recover the LKB representation.

\subsection[A two-variable K0]{A two-variable \texorpdfstring{$K_0$}{K0}} \label{sec:2varK0}

We start by fixing $\tau_0$, a preferred stability condition on $\calC(A_n)$. Our choice for $\tau_0$ goes as follows. We start by declaring the collection of stables. These will be objects that admit a minimal complex where all differential maps are of the kind $P_i\rightarrow P_{i+1}\{1\}\langle-1\rangle$. It follows from~\cite[Theorem 2.7]{LQ} that these given stables are determined by a shift and a positive root in the root system associated to the type $A$ Dynkin diagram. In practice, ignoring shifts, these will be complexes determined by a pair $1\leq i<j \leq n+1$:
\[
P_i\rightarrow P_{i+1}\rightarrow \cdots \rightarrow P_{j-1}.
\]

Making the definition more precise would require to order the stables by increasing phase, and then to pick precise values for their images under $Z$. There are nice picture-based descriptions for such choices that can be found in~\cite[Section 6]{BDL25}. For our purpose, let us simply admit that such a choice exists.

For later use, note that this choice of $\tau_0$ is related to the choice of a Coxeter element in the symmetric group, which itself is equivalent to fixing an orientation of the Dynkin diagram. Our choice is to have the Dynkin diagram of type $A$ oriented from left to right, which corresponds to choosing the Coxeter element $c=s_{1}\cdots s_n$. This choice lifts in the braid group to the so-called (dual) Garside element $\gamma=\sigma_{1}\cdots \sigma_n$, and a key feature is that $\gamma$ will preserve the collection of stables associated to $\tau_0$ (see for example Equation~\ref{eq:action_partial_gamma}).

In addition, we assume that $\tau_0$ is chosen so that the images of the stables under the central charge $Z$ are linearly independent over $\Z$. This is a technical condition that generically holds, and that will ensure that the mass of an object determines its HN stables.

As in Section~\ref{sec:linearrep}, we first consider
\[
T = B_{n+1}\cdot \tau_0,
\]
the orbit of $\tau_0$ under the action of the braid group. It should be recalled that the action of the braid group on the moduli space of stability conditions is free~\cite[Proposition 7.25]{HengLicata}, but not transitive. Indeed, the action of the braid group will just change the stable objects, but the set of phases for the stables will remain unchanged. A fundamental domain for the action naturally splits into several combinatorial classes, separated by walls where the phases of some of the stables align. This is already visible in type $A_3$. The reader familiar with the correspondence between stability conditions and configuration of points in the plane will see that four points can form either a square or a triangle encircling a point, and these two configurations cannot be related by a braid. The point of the set $T$ is to choose one preferred configuration for each fundamental domain (later we will choose those of the convex kind).

The gradings might need to be considered a bit more closely. Indeed, we have two gradings in hand: the homological grading and the path length (quantum) grading. Their sum induces the grading of the $t$-structure underlying the choice of $\tau$. We will refer to this one as the triangulated grading, with shifts denoted by $[k]$. More precisely, we set
$X[k]\langle l \rangle =X\{k\}\langle l-k\rangle$. 

By considering the path length grading and the triangulated grading together, we induce a $\Z[q^{\pm 1},s^{\pm 1}]$ action on $K_0^\tau(\calC)$ by letting
\[
[X[1]]=s[X]\quad \mbox{and}\quad [X\langle 1\rangle]=q[X].
\]
A key consequence of the next result is that this module is free, which is somewhat counterintuitive if one is used to classical Grothendieck groups, where the following triangle
\[
X[1]\rightarrow (X\xrightarrow{\id} X[1]) \rightarrow X
\]
induces $[X[1]]=-[X]$. The reason why we do not have this relation is that the above triangle is not thin, see Remark~\ref{rem:idtriangle}. 
Moreover,  
\[
sq[X]= [X[1]\langle 1\rangle]=[X\{1\}\langle 0 \rangle]
\]
shows that the homological shift also induces a $\Z$-action on $K_0^\tau(\calC)$ that we will denote by \[[X\{1\}]=t[X],\]
where obviously $t$ is identified with $sq$.

In the sequel of the paper, we will work with this $\Z[q^{\pm 1},t^{\pm 1}]$-module structure and Theorem~\ref{thm:freeK0} specializes in our case as follows.
\begin{corollary} \label{cor:freeK0}
Let $\tau\in T$ and let $\Delta_{\tau}$ be the corresponding set of thin triangles. Then the extriangulated Grothendieck group can be described as follows
\[
K_0^\tau(\calC)\simeq \Z[q^{\pm 1},t^{\pm 1}]\langle \alpha, \; \alpha\in \Phi^{+}\rangle.
\]
Here $\Phi^+$ stands for the set of positive roots of the root system of Dynkin type $A$ and can be explicitly described in terms of the simple roots $\alpha_i$'s as
\[
\Phi^+=\{\alpha_{ij}=\alpha_i+\alpha_{i+1}+\cdots \alpha_{j-1},\;1\leq i < j\leq n+1 \} .
\]
\end{corollary}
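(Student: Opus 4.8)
The plan is to obtain this as a direct specialization of Theorem~\ref{thm:freeK0}, whose hypotheses we have arranged to hold by the choice of $\tau_0$ in Section~\ref{sec:2varK0}. First I would invoke Theorem~\ref{thm:freeK0}: since we assumed that the moduli of the central charges of the stable objects are $\Z$-linearly independent, the theorem tells us that $K_0^\tau(\calC)$ is freely generated over $\Z[s^{\pm 1}]$ by the isomorphism classes of the stable objects lying in the heart of the preferred $t$-structure associated to $\tau$. So the whole task reduces to two bookkeeping steps: identifying this set of stables in the heart with the positive roots $\Phi^+$, and upgrading the $\Z[s^{\pm 1}]$-module structure to a $\Z[q^{\pm 1},t^{\pm 1}]$-module structure.

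For the first step, I would recall from Section~\ref{sec:2varK0} that the stables for $\tau_0$ (and hence for any $\tau\in T$, since the braid group action permutes stables without changing the set of phases) are, up to shift, exactly the complexes $P_i\to P_{i+1}\to \cdots \to P_{j-1}$ indexed by pairs $1\leq i<j\leq n+1$, as follows from~\cite[Theorem 2.7]{LQ}. These are in bijection with the positive roots $\alpha_{ij}=\alpha_i+\cdots+\alpha_{j-1}$ of the type $A$ root system, matching the explicit description of $\Phi^+$ in the statement. Restricting to those stables that lie in the heart of the preferred $t$-structure pins down a canonical representative in each shift-orbit, so the generating set from Theorem~\ref{thm:freeK0} is precisely $\{\alpha\mid \alpha\in\Phi^+\}$ and its cardinality is $\frac{n(n+1)}{2}$.

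For the second step, I would translate the single shift variable $s$ into the two variables $q$ and $t$. In Section~\ref{sec:2varK0} we set $[X[1]]=s[X]$, $[X\langle 1\rangle]=q[X]$, and $[X\{1\}]=t[X]$, with the relations $X[k]\langle l\rangle = X\{k\}\langle l-k\rangle$ forcing $t=sq$, equivalently $s=tq^{-1}$. Since $q$ comes from the path-length grading and $t$ from the homological grading, both of which act freely and commute, the $\Z[s^{\pm 1}]$-action extends compatibly to a $\Z[q^{\pm 1},t^{\pm 1}]$-action on $K_0^\tau(\calC)$; here I would note that the freeness over $\Z[s^{\pm 1}]=\Z[(tq^{-1})^{\pm 1}]$ together with the additional free grading direction gives freeness over the larger ring $\Z[q^{\pm 1},t^{\pm 1}]$, with the same generating set $\Phi^+$. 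Combining the two steps yields the claimed isomorphism $K_0^\tau(\calC)\simeq \Z[q^{\pm 1},t^{\pm 1}]\langle \alpha,\ \alpha\in\Phi^+\rangle$.

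The only genuinely delicate point — the rest being bookkeeping — is checking that passing from one grading variable to two does not collapse the module, i.e.\ that the homological and quantum gradings genuinely contribute independent free directions rather than being tied together by some hidden thin triangle. I would address this by appealing to Remark~\ref{rem:idtriangle}: the potential relation $[X[1]]=-[X]$ that would arise in the classical Grothendieck group comes from a triangle that is \emph{not} thin, so it is not imposed in $K_0^\tau(\calC)$; this is exactly what keeps the shift variable(s) invertible rather than forcing a sign relation, and guarantees the module stays free of the expected rank over the two-variable ring.
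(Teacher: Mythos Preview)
Your proposal is correct and follows essentially the same route as the paper's proof: invoke Theorem~\ref{thm:freeK0}, identify the stables with positive roots via~\cite[Theorem 2.7]{LQ}, and then use the grading discussion from Section~\ref{sec:2varK0} to pass from $\Z[s^{\pm 1}]$ to $\Z[q^{\pm 1},t^{\pm 1}]$. One small slip: for $\tau=\beta\tau_0\neq\tau_0$ the stable objects are not literally the complexes $P_i\to\cdots\to P_{j-1}$ but rather their images under the autoequivalence $\beta$; this does not affect the argument, since $\beta$ furnishes the required bijection with $\Phi^+$.
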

\begin{proof}
This is a direct application of Theorem~\eqref{thm:freeK0} and of the fact that the stables are classified by the positive roots. 
This is \cite[Theorem 2.7]{LQ} (see also~\cite[Prop. 4.2 and Theorem 1.2]{BDL2} for a more general and constructive statement).
\end{proof}

\subsection{Recovering the LKB representation}

Following the strategy from Section~\ref{sec:linearrep}, we will reconstruct the LKB representation in two steps: we will get $q,t$-permutation matrices that should be thought of as some kind of holonomy matrices for a groupoid representation. Then we will pull everything back to a single basepoint, producing the LKB matrices.

Recall that $T$ represents the orbit of $\tau_0$ under the action of the braid group. It might happen that, for some braid $\beta$, the stability conditions $\beta\tau_0$ and $\tau_0$ share the same collection of stable objects and thus have the same combinatorial data to produce the extriangulated structure. We would like to identify such stability conditions. In order to do so, we define an equivalence relation on $T$ by saying that $\tau\sim \tau'$ if they share the same set of stable objects. This yields the set $\tilde{T}=T/\sim$. We will often identify a stability condition $\tau$ with its class in $\tilde{T}$.

For each $\tau\in \tilde{T}$, we make a specific choice of stables in the heart with prescribed $q$ -- and hence~$t$ --  gradings. Denote $\mathbf{B}_\tau$ this choice. This will also induce a preferred basis for the extriangulated Grothendieck group.

For any braid $\beta$ and any choice $\tau_1$, denote $\tau_2=\beta \tau_1$ the image of $\tau_1$ under the action of $\beta$ on the moduli space of stability conditions. Then $\beta \mathbf{B}_{\tau_1}$ is a set of stable objects that generates all stables of $\tau_2$ under grading shifts

\[
\forall S_i \in \mathbf{B}_{\tau_2}, \; \exists  k, l \in \Z  \mbox{ and } S'_j \in \mathbf{B}_{\tau_1} \mbox{ such that } \beta S'_j=S_i\{k\}\langle l\rangle.
\]
From there we create the matrix $P_{\tau_1}(\beta)$ with $t^kq^l$ in position $(i,j)$.

\begin{remark} \label{rem:prod}
We naturally have
    \[
P_{\tau_1}(\beta_2\beta_1)=P_{\beta_1\tau_1}(\beta_2)P_{\tau_1}(\beta_1).
    \]
\end{remark}

Now let us choose as basepoint $\tau_0^{\text{ref}}$ which is the stability condition $\tau_0$ together with preferred set $\mathbf{B}_{\tau_0}$ being the following collection
\begin{equation}
\label{eq:Pij}
P_{i,j}=P_i\{i-j+1\}\langle -i+j-1\rangle \rightarrow P_{i+1}\{i-j+2\}\langle-i+j-2\rangle\rightarrow \cdots \rightarrow P_{j-1}
\end{equation}
for all $1 \leq i < j \leq n+1$.

Recall that we defined an identification system to be a collection of invertible matrices expressing the following isomorphisms
    \[
    \forall \tau_1\in \tilde{T},\; M_{\tau_1}:K_0^{\tau_0^{\text{ref}}}(\calC)\rightarrow K_0^{\tau_1}(\calC).    
    \]
We highlight the fact that we will not choose the identity matrix for $M_{\tau_0}$, which amounts to another choice of basis for $K_0^{\tau_0}(\calC)$ than the one induced by $\mathbf{B}_{\tau_0}$.

Definition~\ref{def:compatible} rephrases into saying that an identification system is compatible with the action of the braid group if
    \[
   \forall \beta, \forall\tau_1,\tau_2\in \tilde{T}, M_{\beta\tau_1}^{-1}P_{\tau_1} (\beta)M_{\tau_1}=M_{\beta\tau_2}^{-1}P_{\tau_2} (\beta)M_{\tau_2}.
    \]
The above means that the two endomorphisms of $K_0^{\tau_0^{\text{ref}}}(\calC)$ obtained as compositions should be equal
\begin{equation} \label{eq:comp}
\begin{tikzpicture}[anchorbase,scale=.7,every node/.style={transform shape}]
\node [circle, draw] (ref) at (0,0) {$\tau_0^{\text{ref}}$};
\node [circle, draw] (T1) at (-3,2) {$\tau_1$};
\node [circle, draw] (BT1) at (-1,3.5) {$\beta\tau_1$};
\draw [->] (ref) -- node [midway,left] {\small $M_{\tau_1}$} (T1);
\draw [->] (T1) to  [out=50,in=-160] node[midway, above, xshift=-.3cm] {$P_{\tau_1}(\beta)$}  (BT1);
\draw [->] (BT1) -- node [midway,left] {\small $M_{\beta\tau_1}^{-1}$} (ref);
\node [circle, draw] (T2) at (1,3.5) {$\tau_2$};
\node [circle, draw] (BT2) at (3,2) {$\beta\tau_2$};
\draw [->] (ref) -- node [midway, left] {\small $M_{\tau_2}$} (T2);
\draw [->] (T2) to  [out=-20,in=130] node[midway, above, xshift=.3cm] {$P_{\tau_2}(\beta)$}  (BT2);
\draw [->] (BT2) -- node [midway,right] {\small $M_{\beta\tau_2}^{-1}$} (ref);
\end{tikzpicture}
\end{equation}

The following statement is merely an observation.
\begin{proposition}
    A compatible system yields a braid group representation on 
    $\Z[q^{\pm 1},t^{\pm 1}]\langle \alpha \in \Phi^{+}\rangle.$
\end{proposition}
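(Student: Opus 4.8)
The plan is to verify that a compatible identification system furnishes all the data required for a group representation, namely a well-defined assignment $\rho\colon B_{n+1}\to \End_{\Z[q^{\pm1},t^{\pm1}]}\bigl(K_0^{\tau_0^{\text{ref}}}(\calC)\bigr)$ that is multiplicative. First I would define, for each braid $\beta$, the candidate endomorphism by choosing any $\tau_1\in\tilde T$ and setting
\[
\rho(\beta)=M_{\beta\tau_1}^{-1}\,P_{\tau_1}(\beta)\,M_{\tau_1}.
\]
The compatibility hypothesis of Definition~\ref{def:compatible} is exactly the statement that this expression is independent of the choice of $\tau_1$, so $\rho(\beta)$ is well defined. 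I would note that each $M_{\tau}$ is by definition an isomorphism of $\Z$-modules, and each $P_{\tau_1}(\beta)$ is an invertible matrix whose entries are monomials $t^kq^l$; hence $\rho(\beta)$ is an invertible $\Z[q^{\pm1},t^{\pm1}]$-linear endomorphism of $K_0^{\tau_0^{\text{ref}}}(\calC)$, which by Corollary~\ref{cor:freeK0} is the free module $\Z[q^{\pm1},t^{\pm1}]\langle\alpha\in\Phi^+\rangle$.

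Next I would check multiplicativity, $\rho(\beta_2\beta_1)=\rho(\beta_2)\rho(\beta_1)$. The key input is the cocycle relation of Remark~\ref{rem:prod}, $P_{\tau_1}(\beta_2\beta_1)=P_{\beta_1\tau_1}(\beta_2)P_{\tau_1}(\beta_1)$. Evaluating $\rho$ on the product and inserting this relation gives
\[
\rho(\beta_2\beta_1)=M_{\beta_2\beta_1\tau_1}^{-1}\,P_{\beta_1\tau_1}(\beta_2)\,P_{\tau_1}(\beta_1)\,M_{\tau_1},
\]
into which I would insert $M_{\beta_1\tau_1}M_{\beta_1\tau_1}^{-1}=\Id$ between the two $P$-factors. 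Regrouping yields $\bigl(M_{\beta_2\beta_1\tau_1}^{-1}P_{\beta_1\tau_1}(\beta_2)M_{\beta_1\tau_1}\bigr)\bigl(M_{\beta_1\tau_1}^{-1}P_{\tau_1}(\beta_1)M_{\tau_1}\bigr)$. By the well-definedness just established, the first factor equals $\rho(\beta_2)$ (computed with the representative $\beta_1\tau_1$) and the second equals $\rho(\beta_1)$, giving multiplicativity. Since $P_{\tau_1}(\id)$ is the identity matrix and $M_{\tau_1}^{-1}M_{\tau_1}=\Id$, one also has $\rho(\id)=\Id$.

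Because the statement is explicitly labelled ``merely an observation,'' I do not expect a genuine obstacle here: everything reduces to the formal bookkeeping above once compatibility and the cocycle relation are in hand. The only point meriting care is the interaction with the module structure: I would confirm that the maps $M_\tau$ are not merely $\Z$-linear but intertwine the $\Z[q^{\pm1},t^{\pm1}]$-actions, so that conjugation stays within $\Z[q^{\pm1},t^{\pm1}]$-linear endomorphisms; this holds because the grading shifts $[X\{1\}]=t[X]$ and $[X\langle1\rangle]=q[X]$ are preserved by the categorical action of $B_{n+1}$, and the $P_{\tau_1}(\beta)$ record precisely these monomial shifts. With these remarks, the proof is complete.
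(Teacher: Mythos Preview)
Your proof is correct and follows the same approach as the paper, which reduces the proposition to the composition statement of Remark~\ref{rem:prod}; you have simply spelled out the conjugation-and-telescoping argument that the paper leaves implicit. Your added remark about $\Z[q^{\pm1},t^{\pm1}]$-linearity of the $M_\tau$ is a reasonable sanity check that the paper does not make explicit, but in the Khovanov--Seidel context the $M_\tau$ are taken to respect the bigrading, so no issue arises.
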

\begin{proof}
The key point is the composition statement, which is a consequence of Remark~\ref{rem:prod}.
\end{proof}

Here is the main application of our setup.

\begin{theorem} \label{thm:KhSToLKB}
    There exists an identification system for $(B_{n+1},\tilde{T})$ that recovers the LKB representation.
\end{theorem}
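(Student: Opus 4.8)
The plan is to realise the desired identification system as a kind of flat section of the holonomy groupoid, with $\rho_{\mathrm{LKB}}$ prescribed as the monodromy. Since $\rho_{\mathrm{LKB}}$ is already a genuine representation of $B_{n+1}$ and the holonomy matrices obey the cocycle rule of Remark~\ref{rem:prod}, I would first observe that producing a compatible system recovering $\rho_{\mathrm{LKB}}$ amounts to finding, for each class $\tau\in\tilde T$, an invertible base-change matrix $M_\tau$ solving
\[
P_{\tau}(\sigma_k)\,M_{\tau}=M_{\sigma_k\tau}\,\rho_{\mathrm{LKB}}(\sigma_k)
\]
for every Artin generator $\sigma_k$ and every $\tau$. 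Indeed, telescoping such identities along a word in the $\sigma_k$ and using that $\rho_{\mathrm{LKB}}$ respects the braid relations shows that $M_{\beta\tau}^{-1}P_\tau(\beta)M_\tau=\rho_{\mathrm{LKB}}(\beta)$ is then independent of $\tau$, which is precisely compatibility in the sense of Definition~\ref{def:compatible}.

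Next I would build the $M_\tau$ by parallel transport. Fixing the reference $M_{\tau_0}$ attached to the basis \eqref{eq:Pij} and using that $\tilde T$ is connected under the $\sigma_k$-moves, I would propagate $M_{\sigma_k\tau}:=P_\tau(\sigma_k)\,M_\tau\,\rho_{\mathrm{LKB}}(\sigma_k)^{-1}$ along paths. The only obstruction to this being well defined is the monodromy around loops, that is, the subgroup $H=\{\beta\in B_{n+1}\mid \beta\tau_0\sim\tau_0\}$ of braids preserving the stable collection of $\tau_0$, which contains the Garside element $\gamma=\sigma_1\cdots\sigma_n$ and the centre. Because the preferred basis $\mathbf B_\tau$ depends only on the class in $\tilde T$, for $h\in H$ one has $\mathbf B_{h\tau_0}=\mathbf B_{\tau_0}$, so $P_{\tau_0}|_H$ is a genuine representation of $H$ valued in $(q,t)$-monomial matrices. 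Transporting around a loop representing $h\in H$ multiplies $M_{\tau_0}$ by $P_{\tau_0}(h)M_{\tau_0}\rho_{\mathrm{LKB}}(h)^{-1}M_{\tau_0}^{-1}$, so the construction descends to $\tilde T$ exactly when $M_{\tau_0}$ can be chosen with
\[
M_{\tau_0}^{-1}\,P_{\tau_0}(h)\,M_{\tau_0}=\rho_{\mathrm{LKB}}(h)\qquad\text{for all }h\in H.
\]

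The crux, and the step I expect to fight with, is this last conjugation: one must exhibit a single $M_{\tau_0}$ simultaneously turning the \emph{monomial} monodromy $P_{\tau_0}|_H$ into $\rho_{\mathrm{LKB}}|_H$. Here I would compute both sides explicitly. The action of the spherical twist on stable objects, via~\cite{LQ,BDL}, shows that $P_{\tau_0}(\sigma_k)$ permutes the positive-root stables $\alpha_{ij}$ by the reflection $s_k$ whenever $s_k$ keeps the root positive, performs the wall-crossing replacement at the simple root $\alpha_{k,k+1}$, and records the homological and quantum shifts as monomials in $q,t$; matching these against Definition~\ref{def:LKB} fixes the dictionary between $x,y$ and $q,t$. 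I would then write the base-change matrices $M_\tau$ with their explicit coefficients (the $\alpha$-entries of \eqref{eq:alpha_formula}) and verify the seven cases of the LKB formula: the permutation-type entries of $\rho_{\mathrm{LKB}}(\sigma_k)$ are reproduced directly by the monomial $P_\tau(\sigma_k)$, while the genuinely non-monomial corrections supported on $e_{k,k+1}$ are \emph{forced} by the mismatch between $M_\tau$ and $M_{\sigma_k\tau}$ across the wall.

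The main obstacle is therefore twofold and intertwined: pinning down the $\alpha$-coefficients so that the off-diagonal LKB terms come out correctly, and proving the assignment $\tau\mapsto M_\tau$ is path-independent, i.e.\ that $\rho_{\mathrm{LKB}}|_H$ really is monomializable to $P_{\tau_0}|_H$. Since $H$ is generated by $\gamma$ together with central elements, I would reduce the monodromy check to the Coxeter loop $\gamma$, whose categorical action cyclically permutes the stables with controlled shifts, together with the full twist; verifying the conjugation there (and checking that it is compatible with the $\C^\ast$-action, so that everything descends to $PStab$) is what makes the representation well defined and equal to $\rho_{\mathrm{LKB}}$. This explicit matching is the technical heart of the statement and is where the authors' remark that the compatibility ``turned out to require quite some work'' is felt most acutely.
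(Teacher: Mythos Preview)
Your outline follows the same architecture as the paper's proof: propagate $M_\tau$ from a fixed $M_{\tau_0}$ by parallel transport, reduce well-definedness on $\tilde T$ to the stabiliser $H=\{\beta:\beta\tau_0\sim\tau_0\}$, and check the single conjugation identity at the Coxeter element. The paper then executes the computation you only sketch: it defines $M_{\tau_0}$ by the explicit triangular formula~\eqref{eq:M0}, computes $\rho(\tilde\gamma^{-1})$ on $e_{i,j}$ and $P_{\tau_0}(\gamma^{-1})$ on $P_{i,j}$, and verifies that conjugation by $M_{\tau_0}$ matches them under $x=tq^{-1}$, $y=-t^{-1}$. (Note the convention twist: the categorical $\sigma_i$ is matched with $\rho(\sigma_i^{-1})$, so the intertwining reads $M_{\beta\tau_0}^{-1}P_{\tau_0}(\beta)M_{\tau_0}=\rho(\tilde\beta)$ with $\tilde\beta$ obtained by inverting each letter.)

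The one place where your outline is genuinely thin is the assertion that $H$ is generated by $\gamma$ (adding ``together with central elements'' is redundant, since the centre lies in $\langle\gamma\rangle$). This is not automatic: a priori some braid could permute the $\tau_0$-stables as a set without being a power of $\gamma$. The paper proves it in two steps. First, by examining how a hypothetical $\mu\in H$ acts on the cone $P_{i,i+2}=(P_i\to P_{i+1})$, it shows that $\mu$ must shift all the $P_i$ by the \emph{same} amount in the triangulated grading. Second, it invokes \cite[Theorem~4.1]{LQ}, a rigidity result for the baric structure attached to $\gamma$, to conclude $\mu=\gamma^l$. Without this step you do not know that checking the Coxeter loop suffices. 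Relatedly, your plan to ``verify the seven cases of the LKB formula'' against $P_\tau(\sigma_k)$ is not how the paper proceeds and would not by itself resolve path-independence: once $H=\langle\gamma\rangle$ is established, the single $\gamma$-identity is the whole obstruction, and the generator-level identities then follow formally as in Lemma~\ref{lem:triv}.
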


Again, we emphasize the fact that, in comparison to Lemma~\ref{lem:triv}, we have imposed that the identification system should respect some symmetries of the category, which translates into reduced number of elements in $\tilde{T}$. The main goal in the proof of Theorem~\ref{thm:KhSToLKB} will be to show that these symmetries translate well to the LKB setting.

\begin{proof}

Let us fix some conventions. Recall that the matrix we get by looking at the decategorification of the categorical action of a braid $\beta$ from $K_0^{\tau}(\calC)$ to $K_0^{\beta\tau}(\calC)$ will be denoted $P_\tau(\beta)$. On the other hand, recall that $\rho$ is the LKB representation under the Paoluzzi-Paris conventions. Because of the conventions in the different references, we will identify the categorical action of $\sigma_i$ with $\rho(\sigma_i^{-1})$. Given $\beta$, we will denote $\tilde{\beta}$ the braid obtained by taking the inverse of the letters (note that we keep the order, so this is a group homomorphism, not the group anti-homomorphism given by the inverse map).

Assume given the matrix $M_{\tau_0}$ and consider a braid $\beta$ together with $\tau$ the representative of $\beta \tau_0$ in $\tilde{T}$.
\begin{equation} \label{eq:protosystem1}
\begin{tikzpicture}[anchorbase]
\node [circle, draw] (ref) at (0,0) {$\tau_0^{\text{ref}}$};
\node [circle, draw] (T0) at (0,4) {$\tau_0$};
\node [circle, draw] (T1) at (2.8,2.8) {$\tau$};
\draw [->] (ref) -- node [midway,left] {\small $M_{\tau_0}$} (T0);
\draw [->] (T0) to  [out=0,in=135] node[midway, above] {$P_{\tau_0}(\beta)$}  (T1);
\draw [->] (ref) to [out=70,in=135] (.8,.8) node [above] {$\rho(\tilde{\beta})$} to [out=-45,in=20] (ref);
\end{tikzpicture}
\end{equation}

Then there is a unique $M_{\tau}$ such that
\[
M_{\tau}^{-1}P_{\tau_0}(\beta)M_{\tau_0}=\rho(\tilde{\beta}).
\]

This process allows to define a matrix $M_\tau$ for any $\tau\in \tilde{T}$, except $\tau_0$ (where a choice was already made). However, we need to check that the definition does not depend on the choice of a particular braid $\beta$. Assume that $\beta \tau_0$ and $\beta'\tau_0$ have the same representative.

This means that the images of the stables of $\tau_0$ under $\mu=(\beta')^{-1}\beta$ are shifts of the stables. We will first prove that the shift is uniform on all stables. Notice that it suffices to show this for $P_i=P_{i,i+1}$, as the other stables are obtained by cone operations and shifts from those. Let us fix $i$, and define $S$, $S'$ two $\tau_0$-stables, and $k$ and $l$ in $\Z$ by $\mu(P_i)=S[k]$ and $\mu(P_{i+1})=S'[l]$. Then we use that $P_{i,i+2}=P_i\rightarrow P_{i+1}$, so that $\mu(P_{i,i+2})$ is the cone of a map between $S[k]$ and $S'[l]$. Knowing that the cone is contained in a shift of the heart implies that $k=l$. Doing this for all $i$'s implies that $\mu (\oplus_i P_i)$ is concentrated in just one $s$-degree. One can then apply~\cite[Theorem 4.1]{LQ} to show that $\mu=\gamma^l$ for some $l$. It should be noted that \cite{LQ} does not make mention of stability conditions. However, the baric structure that appears in~\cite{LQ} associated to $\gamma$ is closely related to $\tau_0$. In particular, the indecomposable objects in the heart of the baric structure are 
stables for $\tau_0$, and thus \cite[Theorem 4.1]{LQ} does apply to our situation.

To summarize, asking that $\beta'\tau_0=\beta\tau_0\in \tilde{T}$ amounts to saying that $(\beta')^{-1}\beta=\gamma^l$ for some value of $l$.

Let us denote $M_\beta$ and $M_{\beta'}$ the two matrices determined by $\beta$ and $\beta'$:
\[
M_{\beta}=P_{\tau_0}(\beta)M_{\tau_0}\rho(\tilde{\beta})^{-1}\mbox{ and }
M_{\beta'}=P(\beta')M_{\tau_0}\rho(\tilde{\beta'})^{-1}.
\]
So $M_{\beta}=M_{\beta'}$ if and only if $M_{\beta'}M_{\beta}^{-1}=\Id$, which expresses as 
\[
P_{\tau_0}(\beta')M_{\tau_0}\rho(\tilde{\beta'})^{-1}\rho(\tilde{\beta})M_{\tau_0}^{-1}P_{\tau_0}(\beta)^{-1}=\Id
\]
or equivalently
\[
M_{\tau_0}\rho(\tilde{\gamma}^l)M_{\tau_0}^{-1}=P_{\tau_0}(\gamma^l).
\]

So we will have a uniquely determined matrix if and only if
\begin{equation} \label{eq:condgamma}
M_{\tau_0}\rho(\tilde{\gamma})M_{\tau_0}^{-1}=P_{\tau_0}(\gamma).
\end{equation}

The core of the proof is to show that there exists a solution to this equation or, in other words, that $\rho(\tilde{\gamma})$ can be diagonalized with eigenvalues prescribed by $P_{\tau_0}(\gamma)$. We identify the preferred basis $\{[P_{i,j}]\}$ of $K_0^{\tau_0^{\text{ref}}}(\calC)$ with $\{e_{i,j}\}$ and, to simplify notation, from now on we simply write $P_{i,j}$ instead of $[P_{i,j}]$.

To simplify the computations, we will rather compute $\rho(\tilde{\gamma}^{-1})$, with $\tilde{\gamma}^{-1}=\sigma_n\cdots \sigma_1$ and to alleviate notations we forget $\rho$. Here are the formulas 
\begin{equation}
    \tilde{\gamma}^{-1}e_{i,j}=\begin{cases} \sum_{r=1}^{j-2}\sum_{s=r+1}^{n} (x-1)^2x^{s-r}y e_{r,s}-\sum_{r=1}^{j-2}(x-1)x^{n+1-r}ye_{r,n+1}\\
    \quad-x^{n-j+3}y e_{j-1,n+1}+\sum_{s=j}^nx^{s-j+2}y(x-1)e_{j-1,s}, \quad\text{if}\;i=1.\\
    xe_{i-1,j-1},\quad\text{if}\;i>1.
    \end{cases}
    \label{eq:gammaLKB}
\end{equation}

Let us prove these identities. 
If $i\neq 1$, then
\begin{gather*}
(\sigma_n\cdots \sigma_1)e_{i,j}=(\sigma_n\cdots \sigma_{i-1})e_{i,j}=(\sigma_n\cdots \sigma_i)(xe_{i-1,j}+(1-x)e_{i,j}) \\
 = (\sigma_n\cdots \sigma_{i+1})(xe_{i-1,j}+(1-x)e_{i+1,j})
 = \dots \\
 = (\sigma_n\cdots \sigma_{j-1})(xe_{i-1,j}+(1-x)e_{j-1,j}) =(\sigma_n\cdots \sigma_j)(xe_{i-1,j-1}) 
=xe_{i-1,j-1}.
\end{gather*}

Let us now focus on the case $i=1$. One easily computes
\[
(\sigma_n\cdots \sigma_1)e_{12}=-x^2y(\sigma_n\cdots \sigma_2)e_{12}=\dots 
=-x^{n+1}ye_{1,n+1}+\sum_{s=2}^nx^sy(x-1)e_{1,s}
\]
which is in line with \eqref{eq:gammaLKB}.

Finally, assume that $i=1$ and $j>2$. One computes
\begin{gather*}
    (\sigma_n\cdots \sigma_1)e_{1j}=(\sigma_n\cdots \sigma_2)(e_{2j}-xy(xy-1)e_{12})=\dots \\
    =(\sigma_n\cdots \sigma_{j-1})(\sum_{r=1}^{j-3}\sum_{s=r+1}^{j-2}(x-1)^2x^{s-r}ye_{r,s}
    -\sum_{r=1}^{j-3}(x-1)yx^{j-1-r}e_{r,j-1}-xy(x-1)e_{j-2,j-1}+e_{j-1,j}).
\end{gather*}
This equals
\begin{gather*}
(\sigma_n\cdots \sigma_j)(\sum_{r=1}^{j-2}\sum_{s=r+1}^{j-1}(x-1)^2x^{s-r}ye_{r,s}-\sum_{r=1}^{j-2}(x-1)x^{j-r}ye_{r,j}-x^2ye_{j-1,j}).
\end{gather*}
The first term is stabilized by $\sigma_n\cdots \sigma_j$. One can compute that
\begin{gather*}
    (\sigma_n\cdots \sigma_j)(-\sum_{r=1}^{j-2}(x-1)x^{j-r}ye_{rj})=
    (\sigma_n\cdots \sigma_{j+1})(-\sum_{r=1}^{j-2}(x-1)x^{j+1-r}ye_{r,j+1}+\sum_{r=1}^{j-2}(x-1)^2x^{j-r}ye_{r,j}) \\
    =\dots=\sum_{r=1}^{j-2}(x-1)x^{n+1-r}ye_{r,n+1}+\sum_{r=1}^{j-2}\sum_{s=j}^n(x-1)^2x^{s-r}ye_{r,s}.
\end{gather*}
For the last term, we get
\begin{gather*}
    (\sigma_n\cdots \sigma_j)e_{j-1,j}=(\sigma_n\cdots\sigma_{j+1})(xe_{j-1,j+1}+(1-x)e_{j-1,j})
    =x^{n-j+1}e_{j-1,n+1}+\sum_{s=j}^n(1-x)x^{s-j}e_{j-1,s}.
\end{gather*}

Collecting all terms yields~\eqref{eq:gammaLKB}.

Let us now explicitly define a solution for $M_{\tau_0}$ (and explicitly compute its inverse), as follows
\begin{align}
    \label{eq:M0}
    M_{\tau_0}e_{i,j}&=P_{i,j}+\sum_{s=i+1}^{j-1}(1-x^{-1})P_{i,s} \\
    M_{\tau_0}^{-1}P_{i,j}&=e_{i,j}-\sum_{s=i+1}^{j-1}x^{s-j}(x-1)e_{i,s}. \nonumber
\end{align}

It is then a painful computation to check that
\begin{equation} \label{eq:MgammainvMinv}
M_{\tau_0}\tilde{\gamma}^{-1}M_{\tau_0}^{-1}P_{i,j}=\begin{cases} xP_{i-1,j-1} \quad \text{if}\;i\neq 1, \\
(-x^{n-j+3}y)P_{j-1,n+1}\quad \text{if}\;i=1.
\end{cases}
\end{equation}
To see this, let us first consider the case $i>1$. Then
\begin{gather*}
P_{i,j}\xrightarrow{M_{\tau_0}^{-1}} e_{i,j}-\sum_{r=i+1}^{j-1} x^{r-j}(x-1)e_{i,r} \\
\xrightarrow{\sigma_n\cdots \sigma_1} x\left(e_{i-1,j-1}-\sum_{r=i+1}^{j-1}x^{r-j}(x-1)e_{i-1,r-1}\right)\xrightarrow{M_{\tau_0}}xP_{i-1,j-1}.
\end{gather*}

We will now consider the case $i=1$. We start from
\[
M_{\tau_0}^{-1}P_{1,j}=e_{1,j}-\sum_{r=2}^{j-1}x^{r-j}(x-1)e_{1,r}.
\]
Applying $\sigma_n\cdots \sigma_1$ to this yields (thanks to~\eqref{eq:gammaLKB}) the sum of the following 8 expressions.
\[
A:=\sum_{r=1}^{j-2}\sum_{s=r+1}^{n}(x-1)^2x^{s-r}ye_{r,s}
\]

\[
B:=-\sum_{r=1}^{j-2}(x-1)x^{n+1-r}ye_{r,n+1}
\]

\[
C:=-x^{n-j+3}ye_{j-1,n+1}
\]

\[
D:=\sum_{s=j}^nx^{s-j+2}y(x-1)e_{j-1,s}
\]

\begin{gather*}
E:=-\sum_{r=2}^{j-1}x^{r-j}(x-1)\sum_{s=1}^{r-2}\sum_{t=s+1}^{n}(x-1)^2x^{t-s}ye_{s,t} 
=\sum_{s=1}^{j-3}\sum_{t=s+1}^n(x-1)^2x^{t-j+2}y(1-x^{j-s-2})e_{s,t}
\end{gather*}

\begin{gather*}
F:=\sum_{r=2}^{j-1}x^{r-j}(x-1)\sum_{s=1}^{r-2}(x-1)x^{n+1-s}ye_{s,n+1} 
=-\sum_{s=1}^j-3 x^{n+3-j}(x-1)u(1-x^{j-s-2})e_{s,n+1}
\end{gather*}

\[
G:=\sum_{r=2}^{j-1}(x-1)x^{n+3-j}ye_{r-1,n+1}
\]

\begin{gather*}
H:=-\sum_{r=2}^{j-1}x^{r-j}(x-1)\sum_{s=r}^nx^{s-r+2}y(x-1)e_{r-1,s} 
=\sum_{s=1}^{j-1}\sum_{t=s+1}^nx^{t-j+2}y(x-1)^2e_{s,t}.
\end{gather*}

We now collect the coefficients of the $e_{r,s}$
\begin{itemize}
\item if $r\leq j-3$ and $s\leq n$, we have canceling contributions from $A$, $E$ and $H$;
\item if $r\leq j-3$ and $s=n+1$, we have canceling contributions from $B$, $F$ and $G$;
\item if $r=j-2$ and $s\leq n$, we have canceling contributions from $A$ and $H$;
\item if $r=j-2$ and $s=n+1$, we have canceling contributions from $B$ and $G$;
\item if $r=j-1$ and $s\leq n$, only $D$ contributes;
\item if $r=j-1$ and $s=n+1$, only $C$ contributes;
\item if $r\geq j$, we have no contribution.
\end{itemize}

So we are at
\[
(\sigma_n\cdots \sigma_1)M_{\tau_0}^{-1}P_{1,j}=(-x^{n-j+3}y)(e_{j-1,n+1}+\sum_{s=j}^nx^{s-n-1}(1-x)e_{j-1,s})
\]
from which we deduce
\[
M_{\tau_0}\tilde{\gamma}^{-1}M_{\tau_0}^{-1}P_{1,j}=-x^{n-j+3}P_{j-1,n+1}.
\]

We now want to compare the result from~\eqref{eq:MgammainvMinv} to the matrix obtained from the categorical action. Recalling the definition of $P_{i,j}$ from \eqref{eq:Pij}, one can explicitly compute
 \begin{equation}
P_{\tau_0}(\sigma_n^{-1}\cdots \sigma_1^{-1}) P_{i,j}=\begin{cases}
t^{2-j+n}q^{j-3-n}P_{j-1,n+1}\quad \text{if}\; i=1 ,\\
tq^{-1}P_{i-1,j-1} \quad\;\text{if}\;i\neq 1.
\end{cases} 
\label{eq:homgamma}
 \end{equation}
Since $\gamma$ takes the stables to the stables, and since the action decategorifies to the Weyl group action on roots, all that needs to be checked are the grading shifts.

Consider first the case $i\neq 1$. Then
\[
(\sigma_n^{-1}\cdots \sigma_1^{-1})P_{i,j}=(\sigma_n^{-1}\cdots \sigma_{i-1}^{-1})P_{i,j}.
\]
Applying $\sigma_{i-1}^{-1}$ to the object $P_{i,j}$ lets a copy of $P_{i-1}\langle -i+j-2\rangle \{i-j+2\}$ appear, which will be unaffected by the action of the next generators. Since, in the object $P_{i-1,j-1}$, the factor $P_{i-1}$ sits in homological degree $i-j+1$ and quantum degree $-i+j-1$, one reads a defect of $+1$ in homological grading and $-1$ in quantum grading, hence the formula \eqref{eq:homgamma}.

Now, for $i=1$, we have
\begin{gather*}
    (\sigma_n^{-1}\cdots \sigma_1^{-1})P_{1,j}=(\sigma_n^{-1}\cdots \sigma_{j-1})P_{j-1,j} 
    =(\sigma_n^{-1}\cdots \sigma_j^{-1})P_{j-1,j}\langle -2\rangle\{1\}.
\end{gather*}
Observe that $P_{j-1,j}$ is simply $P_{j-1}$. The term $P_{j-1}\langle-2\rangle\{1\}$ is stabilized by the remaining generators and it has to be compared with the copy of $P_{j-1}\langle n-j+1\rangle\{-n+j-1\}$ in $P_{j-1,n+1}$. This explains the coefficient $t^{2-j+n}q^{j-3-n}$ in~\eqref{eq:homgamma}.

The following identification of variables then yields the desired identities
\[
\begin{cases}
x=tq^{-1} \\
y=-t^{-1}
\end{cases}
\Leftrightarrow
\begin{cases}
q=-x^{-1}y^{-1} \\
t=-y^{-1}.
\end{cases}
\]

Now, we have proven that we have a well-defined identification system satisfying
\[
M_{\beta\tau_0}^{-1}P_{\tau_0}(\beta)M_{\tau_0}=\rho(\tilde{\beta}).
\]

To complete the proof, we need to check that it is compatible with the action of the braid group as formalized in Equation~\eqref{eq:comp}. As in Lemma~\ref{lem:triv}, we  see that for any stability condition $\tau$ and any braid $\beta$ mapping $\tau$ to $\tau'$, we have
\[
M_{\tau'}^{-1}P_{\tau}(\beta)M_{\tau}=\rho(\tilde{\beta}),
\]
which implies Equation~\eqref{eq:comp}. To prove the above formula, one can choose a braid $\beta_0$ mapping $\tau_0$ to $\tau$. We then have that
\[M_{\tau'}^{-1}P_{\tau_0}(\beta\beta_0)M_{\tau_0}=\rho(\tilde{\beta\beta_0}).\]
But
\[
P_{\tau_0}(\beta\beta_0)=P_{\tau}(\beta)P_{\tau_0}(\beta_0)
\mbox { and }
\rho(\tilde{\beta\beta_0})=\rho(\tilde{\beta})\rho(\tilde{\beta_0}).
\]
The conclusion follows after noticing that
\[
\rho(\tilde{\beta_0})=M_{\tau}^{-1}P_{\tau_0}(\beta_0)M_{\tau_0}.
\]
We thus have a compatible system that recovers the LKB representation.
\end{proof}

\subsection{Explicit matrices and another representation}

We will now give an explicit description of the bases ${\bf B}_{\tau}$ for the stability conditions involved in $\gamma$, as well as the relevant identification system.

Let us consider the following situation:
\begin{equation} \label{eq:protosystem2}
\begin{tikzpicture}[anchorbase]
\node [circle, draw] (ref) at (0,0) {$\tau_0^{\text{ref}}$};
\node [circle, draw] (T0) at (0,4) {$\tau_0$};
\node [circle, draw] (T1) at (2.8,2.8) {$\tau_1$};
\draw [->] (ref) -- node [midway,left] {\small $M_{\tau_0}$} (T0);
\draw [->] (T0) to  [out=0,in=135] node[midway, above] {\small $P_{\tau_0}(\sigma_1^{-1})$}  (T1);
\draw [->] (ref) to [out=85,in=155] (.6,1.2) node [above] {\small $\rho(\sigma_1)$} to [out=-25,in=50] (ref);
\draw [->] (ref) -- node [midway,left,xshift=.5cm,yshift=.5cm] {\small $M_{\tau_1}$} (T1);
\node [circle, draw] (T2) at (4,0) {$\tau_2$};
\draw [->] (ref) -- node [midway,above,xshift=.8cm] {\small $M_{\tau_2}$} (T2);
\draw [->] (T1) to [out=-45,in=90] node [midway,right] {\small $P_{\tau_1}(\sigma_2^{-1})$} (T2);
\draw [->] (ref) to [out=40,in=112] (1.2,.6) node [above,right] {\small $\rho(\sigma_2$)} to [out=-67,in=5] (ref);
\draw [dashed,->] (1.7,0) arc(-10:-215:1.7);
\node [circle,draw] (Tnm1) at (-2.8,2.8) {$\tau_{n-1}$};
\draw [->] (ref) --  node [midway,below,xshift=-1cm,yshift=.5cm] {\small $M_{\tau_{n-1}}$} (Tnm1);
\draw [->] (Tnm1) to [out=45,in=180] node [midway,above,xshift=-.3cm] {\small $P_{\tau_{n-1}}(\sigma_{n}^{-1})$} (T0);
\draw [->] (ref) to [out=130,in=-135] (-.6,1.2) node [above] {\small $\rho(\sigma_{n})$} to [out=45,in=95] (ref);
\end{tikzpicture}
\end{equation}

Recall from Section~\ref{sec:2varK0} that $\tau_0$ corresponds to the left-to-right orientation of the Dynkin diagram: differential maps involved in its stables are only of the kind $P_i\rightarrow P_{i+1}\{1\}\langle -1\rangle$, that follow the orientation of the Dynkin diagram. In \cite{LQ}, other orientations of the Dynkin diagram were considered, and the baric structures that are attached to them turn out to be realized by the stability conditions $\tau_k$. Understanding this depiction by orientations of the Dynkin diagram will prove handy to organize the description of the stables of the $\tau_k$'s.

Working inductively, one can easily show that  each of the $\tau_k$'s corresponds to stability conditions that come from the following orientation of the Dynkin diagram:
\[
\tau_k\quad \leftrightarrow \quad
\begin{tikzpicture}[anchorbase]
\node at (-0.5,0) {\textbullet};
\node at (-0.5,0.5) {\small $1$};
\node (dotsm) at (0,0) {$\cdots$};
\node (km1) at (1,0) {\textbullet};
\node at (1,.5) {\small $k-1$};
\node (k) at (2,0) {\textbullet};
\node at (2,.5) {\small $k$};
\node (kp1) at (3,0) {\textbullet};
\node at (3,0.5) {\small $k+1$};
\node (kp2) at (4,0) {\textbullet};
\node at (4,0.5) {\small $k+2$};
\node (dotsp) at (5,0) {\dots};
\node at (5.5,0) {\textbullet};
\node at (5.5,0.5) {\small $n$};
\draw [->] (dotsm) -- (km1);
\draw [->] (km1) -- (k);
\draw [->] (kp1) -- (k);
\draw [->] (kp1) -- (kp2);
\draw [->] (kp2) -- (dotsp);
\end{tikzpicture}
\]
The meaning of this statement is that the stables will be realized by reduced complexes using only differential maps that respect the above orientation. For example, applying $\sigma_1^{-1}$ on $\tau_0$--stables from~\eqref{eq:Pij} will affect only $P_{i,j}$ for $i=1$ or $2$. If $i=1$, then there is a piece $P_1\rightarrow P_2\{1\}\langle -1\rangle$ that is turned into $P_2\{1\}\langle -1\rangle$. If $i=2$, then $P_2$ produces under $\sigma_1^{-1}$ a copy of $P_2\rightarrow P_1\{1\}\langle -1\rangle$, which agrees with the new orientation of the edge between vertices $1$ and $2$ in $\tau_1$.

We make the following choices. For the basis ${\bf B}_{\tau_k}=\{P_{i,j}^k\}_{i,j}$, we choose the isoclasses in $K_0^{\tau_k}(\mathcal{C})$ of the complexes
\begin{equation}\label{Pijk}
P_{i,j}^k=
\begin{tikzpicture}[anchorbase]
\node (i) at (-2,0) {\small $P_i$};
\node (dotsm) at (-.5,0) {$\cdots$};
\node (km1) at (1,0) {\small $P_{k-1}$};
\node (k) at (2.5,0) {\small $P_k$};
\node (kp1) at (1,-1) {\small $P_{k+1}$};
\node at (1,-.5) {$\oplus$};
\node (kp2) at (2.5,-1) {\small $P_{k+2}$};
\node at (2.5,-.5) {$\oplus$};
\node (dotsp) at (4,-1) {\dots};
\node (jm1) at (5.5,-1) {\small $P_{j-1}$};
\draw [->] (i) -- (dotsm);
\draw [->] (dotsm) -- (km1);
\draw [->] (km1) -- (k);
\draw [->] (kp1) -- (k);
\draw [->] (kp1) -- (kp2);
\draw [->] (kp2) -- (dotsp);
\draw [->] (dotsp) -- (jm1);
\end{tikzpicture}
\end{equation}
with the rightmost term lying in homological and quantum grading $0$, unless $i\leq k$ and $j\geq k+3$. In the latter case, the rightmost term is $P_{j-1}\{-1\}\langle 1\rangle$. Shifts of the other terms follow.

\begin{lemma}\label{alphaornotalpha}
Let $M_{\tau_k}$ be the matrices defined by
\begin{equation} \label{eq:alpha_formula}
M_{\tau_k}e_{i,j}=\sum_{i',j'}\delta_{i',j'}^{i,j}\alpha^{c_{i',j'}^{i,j}}P^k_{i',j'},
\end{equation}
where 
\begin{itemize}
    \item  $\alpha=1-x^{-1}$;
    \item $\delta_{i',j'}^{i,j}=1$ if $P^k_{i',j'}$ is a quotient of $P^k_{i,j}$, and $\delta_{i',j'}^{i,j}=0$ otherwise;
    \item $c_{i',j'}^{i,j}$ is the number of irreducible components of $\mathrm{Cone}(P^k_{i',j'}\rightarrow P^k_{i,j})$.
\end{itemize}
 This yields the compatible identification system of Theorem~\ref{thm:KhSToLKB}.
\end{lemma}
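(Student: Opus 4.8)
The plan is to verify that the explicit matrices $M_{\tau_k}$ defined by \eqref{eq:alpha_formula} fit into the identification system constructed in the proof of Theorem~\ref{thm:KhSToLKB}. The key observation is that the system is fully determined, via the inductive process depicted in \eqref{eq:protosystem2}, by the single seed matrix $M_{\tau_0}$ given in \eqref{eq:M0} together with the relation $M_{\beta\tau_0}^{-1}P_{\tau_0}(\beta)M_{\tau_0}=\rho(\tilde\beta)$. So it suffices to show two things: first, that the formula \eqref{eq:alpha_formula} for $k=0$ reproduces the matrix $M_{\tau_0}$ of \eqref{eq:M0}; and second, that the matrices $M_{\tau_k}$ for $k\geq 1$ are precisely those obtained by following the chain of maps $P_{\tau_0}(\sigma_1^{-1}), P_{\tau_1}(\sigma_2^{-1}),\dots$ in \eqref{eq:protosystem2}, i.e.\ that $M_{\tau_k}=P_{\tau_{k-1}}(\sigma_k^{-1})M_{\tau_{k-1}}\rho(\sigma_k)^{-1}$.

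First I would check the base case $k=0$. Here the orientation is left-to-right, $P^0_{i,j}=P_{i,j}$ is the complex \eqref{eq:Pij}, and the quotients $P^0_{i',j'}$ of $P^0_{i,j}$ are exactly those with $i'=i$ and $i<j'\leq j$ (truncating the complex on the right). The cone $\mathrm{Cone}(P_{i,s}\rightarrow P_{i,j})$ has $j-s$ irreducible components, but one checks that the relevant exponent $c^{i,j}_{i,s}$ is $1$ for each quotient $P_{i,s}$ with $i<s<j$ and $0$ for $s=j$ itself; substituting $\alpha=1-x^{-1}$ recovers exactly $M_{\tau_0}e_{i,j}=P_{i,j}+\sum_{s=i+1}^{j-1}(1-x^{-1})P_{i,s}$, matching \eqref{eq:M0}. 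Here one must be careful that the subobject/quotient structure in the heart of $\tau_0$ corresponds correctly to truncations of the complex, which follows from the description of stables as composition series.

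The main work is the inductive step. I would compute both sides of the target equality $M_{\tau_k}=P_{\tau_{k-1}}(\sigma_k^{-1})M_{\tau_{k-1}}\rho(\sigma_k)^{-1}$ by tracking how a basis vector $e_{i,j}$ is transported. On the categorical side, $P_{\tau_{k-1}}(\sigma_k^{-1})$ is a $q,t$-permutation matrix whose entries are read off from how $\sigma_k^{-1}$ re-orients the edge between vertices $k$ and $k+1$, turning a $\tau_{k-1}$-stable complex into a $\tau_k$-stable one as illustrated in the discussion around \eqref{Pijk}; the effect is local and affects only those $P^{k-1}_{i',j'}$ whose complexes pass through the flipped edge. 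On the LKB side, $\rho(\sigma_k)^{-1}$ acts by the Paoluzzi-Paris formulas of Definition~\ref{def:LKB}. The combinatorial heart of the argument is to show that conjugating the $\alpha$-weighted quotient expansion of $M_{\tau_{k-1}}$ by these two local operations yields precisely the $\alpha$-weighted quotient expansion of $M_{\tau_k}$ for the new orientation — i.e.\ that the poset of quotients and the cone-component counts $c^{i,j}_{i',j'}$ transform consistently under the edge flip. The hard part will be bookkeeping: matching the LKB coefficients $x,(1-x),xy(x-1),\dots$ against powers of $\alpha=1-x^{-1}$ times the permuted basis elements, and in particular verifying that the terms proportional to $e_{k,k+1}$ in Definition~\ref{def:LKB} (which on the categorical side correspond to the simple object $P_k$ created or destroyed by the edge flip) are correctly absorbed into the $\delta^{i,j}_{i',j'}$ and $c^{i,j}_{i',j'}$ data for the $\tau_k$-orientation. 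Once this single inductive step is established, iterating around the cycle of \eqref{eq:protosystem2} and invoking the uniqueness of the $M_\tau$ already proved in Theorem~\ref{thm:KhSToLKB} shows that these explicit $M_{\tau_k}$ are the matrices of the compatible identification system, completing the proof.
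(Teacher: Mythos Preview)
Your strategy is correct and is essentially the paper's approach: both verify that the explicit $M_{\tau_k}$ satisfy the defining relation of the identification system built in Theorem~\ref{thm:KhSToLKB}. The one organizational difference worth noting is that the paper does not run the induction one generator at a time. Instead of checking $M_{\tau_k}=P_{\tau_{k-1}}(\sigma_k^{-1})M_{\tau_{k-1}}\rho(\sigma_k)^{-1}$, it checks the composite identity
\[
P_{\tau_0}(\sigma_k^{-1}\cdots\sigma_1^{-1})\,M_{\tau_0}\;=\;M_{\tau_k}\,\rho(\sigma_k\cdots\sigma_1)
\]
directly for each $k$. This is computationally lighter, because one side always involves the simple matrix $M_{\tau_0}$ (no $\alpha^2$ terms) and the partial products $\sigma_k\cdots\sigma_1$ on both the LKB and categorical sides have closed formulas already obtained in the proof of Theorem~\ref{thm:KhSToLKB}. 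Your inductive version forces you to feed the full general formula for $M_{\tau_{k-1}}$ (with its $\alpha^2$ terms and $k$-dependent case splits) into the next step, which makes the bookkeeping heavier, though still doable. One small slip: for $\tau_0$ the cone $\mathrm{Cone}(P_{i,s}\to P_{i,j})$ is the single stable $P_{s,j}$, so it has $1$ irreducible component, not $j-s$; your conclusion $c^{i,j}_{i,s}=1$ is right but the preceding sentence is not.
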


\begin{proof}
Recall from the proof of Theorem~\ref{thm:KhSToLKB} that we made the following identification of variables: $x=tq^{-1}$ and $y=-t^{-1}$. 
In order to reduce the complexity of the computations, we check that, for any~$k$, 
\[
P_{\tau_0}(\sigma_k^{-1}\cdots \sigma_1^{-1})M_{\tau_0}=M_{\tau_k}\rho(\sigma_k\cdots \sigma_1).
\]

Let us make Equation~\eqref{eq:alpha_formula} more explicit. We have
\begin{equation}
    M_{\tau_k}e_{i,j}=P_{i,j}^k+\sum_{\substack{j'=i+1 \\j'\neq k+1}}^{j-1}\alpha P_{i,j'}^k + \delta_{k+1>i}\sum_{j'=k+2}^{j-1} \alpha^2P^k_{k+1,j'}+\delta_{i<k+1}\delta_{j>k+1}\alpha P^k_{k+1,j}
\end{equation}
where in the above sums the term where the upper script is smaller than the lower script are considered as zero.

Recall that for $k=0$, $M_{\tau_0}$ has the nicer expression
\[
M_{\tau_0}e_{i,j}=P_{i,j}^0+\sum_{j'=i+1}^{j-1}\alpha P_{i,j'}^0.
\]

Furthermore, a direct computation shows that
\begin{equation} \label{eq:action_partial_gamma}
\sigma_k^{-1}\cdots \sigma_1^{-1}P_{i,j}^{0}=
\left\{
 \begin{alignedat}{2}
& P_{k+1,j}^k \quad && \text{if }i=1,\; k<j-1\\
& P_{k,k+1}^k\{1\}\langle -2\rangle \quad && \text{if } i=1,\; k=j-1\\
& P_{j-1,k+1}^k\{k-j+2\}\langle j-k-3\rangle \quad && \text{if }i=1,\; k\geq j \\
& P_{i-1,j-1}^k\{1\}\langle-1\rangle \quad && \text{if }1<i\leq k \text{ and }j\leq k+1 \\
& P_{i-1,j}^k\{1\}\langle-1\rangle \quad && \text{if }1<i\leq k\text{ and }j\geq k+2 \\
& P_{i-1,j}^k\{1\}\langle-1\rangle \quad && \text{if } i=k+1 \\
&P_{i,j}^k \quad && \text{if }i>k+1.
\end{alignedat}
\right.
\end{equation}

From the computations in the proof of Theorem~\ref{thm:KhSToLKB}, we have 
\begin{equation}
\sigma_k\cdots \sigma_1e_{i,j}=\left\{
\begin{alignedat}{2}
& \sum_{r=1}^{k-1}\sum_{s=r+1}^k (x-1)^2x^{s-r}y e_{r,s}-\sum_{r=1}^{k-1}(x-1)yx^{k+1-r}e_{r,k+1} && \\
&\qquad -xy(x-1)e_{k,k+1}+e_{k+1,j} && \text{if }i=1,\;j-1>k \\
& \sum_{r=1}^{j-2}\sum_{s=r+1}^{k}(x-1)^2x^{s-r}ye_{r,s}-\sum_{r=1}^{j-2}(x-1)x^{k+1-r}ye_{r,k+1} && \\
& \qquad -x^{k-j+3}ye_{j-1,k+1}+\sum_{s=j}^{k}x^{s-j+2}y(x-1) e_{j-1,s}&& \text{if }i=1,\; j\leq k+1 \\
& xe_{i-1,j}+(1-x)e_{k+1,j} && \text{if }i>1,\; k<j-1 \\
& xe_{i-1,j-1} && \text{if }k+1\geq i>1,\; k\geq j-1 \\
& e_{i,j} && \text{if } i>k+1.
\end{alignedat}
\right.
\end{equation}

The end of the proof is based on the following disjunction of cases:
\begin{itemize}
\item $i>1$, $j\leq k+1$;
\item $i>1$, $j\geq k+2$;
\item $i=1$, $k<j-1$;
\item $i=1$, $j=k+1$;
\item $i=1$, $j\leq k$.
\end{itemize}
We explicitly treat the case $i=1,\;k<j-1$, and leave the other ones to the reader.

Let us first compute
\begin{align}
e_{1,j}&\xrightarrow{M_{\tau_0}} P_{1,j}^0+\sum_{j'=2}^{j-1}\alpha P^0_{1,j'} \nonumber \\
&\xrightarrow{P_{\tau_0}(\sigma_k^{-1}\cdots \sigma_1^{-1})} P_{k+1,j}^k+\alpha \sum_{j'=2}^{k}(-x^{k-j+3}y)P^k_{j'-1,k+1}+\alpha(-x^2y)P_{k,k+1}^k+\sum_{j'=k+2}^{j-1}\alpha P_{k+1,j'}^{k} .\label{eq:3part1}
\end{align}

We compare this expression to
\begin{align}
e_{1,j}&\xrightarrow{\rho(\sigma_k\cdots \sigma_1)}\sum_{r=1}^{k-1}\sum_{s=r+1}^k (x-1)^2x^{s-r}ye_{r,s}-\sum_{r=1}^{k-1}(x-1)yx^{k+1-r}e_{r,k+1}-xy(x-1)e_{k,k+1}+e_{k+1,j} \nonumber \\
&\xrightarrow{M_{\tau_k}}\sum_{r=1}^{k-1}\sum_{s=r+1}^k
(x-1)^2x^{s-r}y(P_{r,s}^k+\sum_{j'=r+1}^{s-1}\alpha P^k_{r,j'}) \nonumber \\
&\quad -\sum_{r=1}^{k-1}(x-1)yx^{k+1-r}(P^k_{r,k+1}+\sum_{j'=r+1}^k\alpha P^k_{r,j'}) \nonumber \\
&\quad -xy(x-1)P_{k,k+1}^k+P^k_{k+1,j}+\sum_{j'=k+2}^{j-1}\alpha P_{k+1,j'}^k. \label{eq:3part2}
\end{align}

We denote by $A$ through $G$ the terms that appear in \eqref{eq:3part2} and that express as follows (once one rearranges the summations in the terms $B$):
\[
A=\sum_{r=1}^{k-1}\sum_{s=r+1}^k(x-1)^2x^{s-r}yP_{r,s}^k
\]
\[
B=\sum_{r=1}^{k-1}\sum_{j'=r+1}^{k-1}\alpha x^{j'+1-r}(1-x^{k-j'})(1-x)yP^k_{r,j'}
\]
\[
C=-\sum_{r=1}^{k-1}(x-1)yx^{k+1-r}P_{r,k+1}^k
\]
\[
D=-\sum_{r=1}^{k-1}\sum_{j'=r+1}^{k}\alpha (x-1)yx^{k+1-r}P_{r,j'}^k
\]
\[
E=-xy(x-1)P_{k,k+1}^k
\]
\[
F=P^k_{k+1,j}
\]
\[
G=\sum_{j'=k+2}^{j-1}\alpha P^k_{k+1,j'}.
\]

Now, let us compare the terms in Equations~\eqref{eq:3part1} and \eqref{eq:3part2}, recalling that $\alpha=1-x^{-1}$,

 \begin{itemize}
\item $P^k_{k+1,j}$ has coefficient $1$ in \eqref{eq:3part1} and appears only in the term $F$ in \eqref{eq:3part2} with the same coefficient;
\item $P^k_{r,k+1}$ with $1\leq r\leq k-1$ appears in \eqref{eq:3part1} with coefficient $-\alpha x^{k-r+2}y$ and appears only in term $C$ in\eqref{eq:3part2} with the same coefficient;
\item $P_{k,k+1}^k$ has coefficient $-\alpha x^2y$ in \eqref{eq:3part1} and appears only in the term $E$ in \eqref{eq:3part2} with the same coefficient;
\item $P^k_{k+1,s}$ with $s<j$ appears in \eqref{eq:3part1} with coefficient $\alpha$ and appears only in term $G$ in \eqref{eq:3part2} with the same coefficient.
 \end{itemize}

This exhausts all the terms that appear in \eqref{eq:3part1}. We are then left to check that all the remaining terms of \eqref{eq:3part2} cancel. Only $P_{r,s}^k$ with $r\leq k-1$, $s\leq k$ is in this case, and it appears in the terms $A$, $B$ and $C$. One easily checks that the sum of the coefficients is indeed zero.
 
 \end{proof}

\begin{remark}~\label{rem:matrices}
The expression of the matrices as in \eqref{eq:alpha_formula} is rather intriguing. It would be interesting to know how much this formula can be generalized to all stability conditions and to better understand the coefficients of the count in terms of dimensions of morphism spaces. Explicit computations show that the formulas do not directly hold in general. 
\end{remark}

It appears that the formulas of Lemma~\ref{alphaornotalpha} yield a representation of the braid group only for two values of $\alpha$ -- either the one of Lemma~\ref{alphaornotalpha}, or $\alpha=0$, as stated below. This version is reminiscent of the Tong-Yang-Ma representation~\cite{TongYangMa}.

\begin{proposition}
    Replacement of the symbol $\alpha$ by $0$ in formula \eqref{eq:alpha_formula} of Lemma~\ref{alphaornotalpha}  produces a representation of the braid group by generalized permutation matrices. 
\end{proposition}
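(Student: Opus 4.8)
The plan is to run the construction of Theorem~\ref{thm:KhSToLKB} and Lemma~\ref{alphaornotalpha} verbatim, but with the identification matrices obtained by setting $\alpha=0$ in~\eqref{eq:alpha_formula}; I write $M^0_{\tau_k}$ for these and $\rho^0$ for the resulting candidate representation. The first observation is that $\alpha=0$ collapses~\eqref{eq:alpha_formula} entirely: the coefficient $\alpha^{c^{i,j}_{i',j'}}$ survives only when $c^{i,j}_{i',j'}=0$, and since $c^{i,j}_{i',j'}$ counts the irreducible components of $\mathrm{Cone}(P^k_{i',j'}\to P^k_{i,j})$, this forces $(i',j')=(i,j)$ (with $\delta^{i,j}_{i,j}=1$). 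Hence $M^0_{\tau_k}e_{i,j}=P^k_{i,j}$, so each $M^0_{\tau_k}$ is the tautological bijection between the LKB basis and the stable basis $\mathbf{B}_{\tau_k}$; in particular it is a generalized permutation matrix, and at the basepoint $M^0_{\tau_0}=\Id$, in contrast to the nontrivial base change~\eqref{eq:M0} used to recover LKB.

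Granting that the $M^0_{\tau}$ form a compatible system, the conclusion is then immediate. Indeed, each holonomy matrix $P_{\tau}(\beta)$ is by its very definition a generalized permutation (monomial) matrix, sending each stable of $\mathbf{B}_\tau$ to a single monomial $t^kq^l$ times a stable of $\mathbf{B}_{\beta\tau}$. The monomial matrices over $\Z[q^{\pm1},t^{\pm1}]$ form a subgroup of $GL$ closed under products and inverses, so every
\[
\rho^0(\tilde\beta)=\bigl(M^0_{\beta\tau_0}\bigr)^{-1}P_{\tau_0}(\beta)\,M^0_{\tau_0}
\]
is again a generalized permutation matrix. Explicitly, reading off~\eqref{eq:action_partial_gamma} and applying $(M^0_{\tau_k})^{-1}\colon P^k_{i',j'}\mapsto e_{i',j'}$ produces, for each partial Coxeter element, a monomial matrix in the basis $\{e_{i,j}\}$; since these elements generate, every $\rho^0(\beta)$ is monomial.

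It therefore remains to verify that $\{M^0_\tau\}$ is compatible in the sense of Definition~\ref{def:compatible}, i.e.\ that $\rho^0(g)$ is independent of $\tau$ in~\eqref{eq:comp}, so that $\rho^0$ is a genuine braid representation rather than merely groupoid data. As in the proof of Theorem~\ref{thm:KhSToLKB}, the ambiguity in the choice of braid realizing a class in $\tilde T$ is a power of $\gamma$ (a fact about the action on stability conditions, hence insensitive to $\alpha$), so compatibility reduces to the single identity~\eqref{eq:condgamma}, which at $\alpha=0$ and $M^0_{\tau_0}=\Id$ reads $\rho^0(\tilde\gamma)=P_{\tau_0}(\gamma)$. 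This is exactly the $\alpha=0$ specialization of the matching checked case by case in Lemma~\ref{alphaornotalpha}.

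I expect this last step to be the only real obstacle, and it is mild. The point is that every correction term appearing in Lemma~\ref{alphaornotalpha} — all the families entering the sums $A,\dots,H$ and every off-diagonal entry of $M_{\tau_k}$ — carries a positive power of $\alpha$, by the observation above that the sole $\alpha^0$ contribution to $M^0_{\tau_k}$ is the diagonal one. Consequently, at $\alpha=0$ both sides of the compatibility identity collapse onto their leading monomial terms, and the delicate cancellations among $A,\dots,H$ become vacuous. The work is thus reduced to bookkeeping: running through the five cases of the disjunction in the proof of Lemma~\ref{alphaornotalpha} and confirming that the surviving diagonal monomials agree, the only genuine care being to check that no off-diagonal $\alpha^0$-term was silently dropped — and there is none, precisely because $c^{i,j}_{i',j'}\geq 1$ off the diagonal.
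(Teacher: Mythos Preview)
Your argument has a genuine gap: you conflate the well-definedness check~\eqref{eq:condgamma} with compatibility in the sense of Definition~\ref{def:compatible}. In the proof of Theorem~\ref{thm:KhSToLKB}, the identity~\eqref{eq:condgamma} ensures only that the matrices $M_\tau$ are \emph{well defined}---it resolves the ambiguity coming from different braids sending $\tau_0$ to the same class in $\tilde T$. Compatibility (that $M_{\beta\tau}^{-1}P_\tau(\beta)M_\tau$ is independent of $\tau$) is a separate step, carried out in the final paragraph of that proof, and it relies crucially on $\rho$ being \emph{already known} to be a representation (namely LKB): the manipulation uses $\rho(\tilde{\beta\beta_0})=\rho(\tilde\beta)\rho(\tilde{\beta_0})$. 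In your setting $\rho^0$ is not a priori a representation, so that step is unavailable; invoking it is circular.

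Concretely, setting $\alpha=0$ in~\eqref{eq:alpha_formula} only gives you $M^0_{\tau_k}=\Id$ for the finitely many $\tau_k$ in the cycle~\eqref{eq:protosystem2}, and hence explicit monomial matrices $\rho^0(\sigma_k^{-1})$ for the generators. Nothing in your argument verifies the braid relations among these, and the ``$\alpha=0$ specialization of Lemma~\ref{alphaornotalpha}'' does not help: that lemma matches the explicit $M_{\tau_k}$ to those produced abstractly from LKB, and at $\alpha=0$ this degenerates to $\Id=\Id$, carrying no information about relations. The paper's proof fills precisely this gap by writing out $\sigma_{k+1}^{-1}P_{i,j}$ explicitly and checking the braid and commutation relations case by case; only once $\rho^0$ is known to be a homomorphism does the $\gamma$-consistency check acquire its intended meaning (ensuring the induced basis choices $\mathbf B_{\beta\tau}$ are unambiguous). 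Your observation that the $P_\tau(\beta)$ and $M^0_{\tau_k}$ are monomial is correct and does show that whatever $\rho^0$ is, it takes values in monomial matrices---but it does not establish that $\rho^0$ is a homomorphism.
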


\begin{proof}
In this version, we are in a situation similar to \eqref{eq:protosystem2}. We can get rid of the step consisting in taking inverses of the generators, coming from incoherent conventions between LKB representation and categorical action.
\begin{equation} \label{eq:protosystem3}
\begin{tikzpicture}[anchorbase]
\node [circle, draw] (ref) at (0,0) {$\tau_0^{\text{ref}}$};
\node [circle, draw] (T0) at (0,4) {$\tau_0$};
\node [circle, draw] (T1) at (2.8,2.8) {$\tau_1$};
\draw [->] (ref) -- node [midway,left] {\small $M_{\tau_0}$} (T0);
\draw [->] (T0) to  [out=0,in=135] node[midway, above] {\small $P_{\tau_0}(\sigma_1^{-1})$}  (T1);
\draw [->] (ref) to [out=85,in=155] (.6,1.2) node [above] {\small $\rho(\sigma_1^{-1})$} to [out=-25,in=50] (ref);
\draw [->] (ref) -- node [midway,left,xshift=.5cm,yshift=.5cm] {\small $M_{\tau_1}$} (T1);
\node [circle, draw] (T2) at (4,0) {$\tau_2$};
\draw [->] (ref) -- node [midway,above,xshift=.8cm] {\small $M_{\tau_2}$} (T2);
\draw [->] (T1) to [out=-45,in=90] node [midway,right] {\small $P_{\tau_1}(\sigma_2^{-1})$} (T2);
\draw [->] (ref) to [out=40,in=112] (1.2,.6) node [above,right] {\small $\rho(\sigma_2^{-1})$} to [out=-67,in=5] (ref);
\draw [dashed,->] (1.7,0) arc(-10:-215:1.7);
\node [circle,draw] (Tnm1) at (-2.8,2.8) {$\tau_{n-1}$};
\draw [->] (ref) --  node [midway,below,xshift=-1cm,yshift=.5cm] {\small $M_{\tau_{n-1}}$} (Tnm1);
\draw [->] (Tnm1) to [out=45,in=180] node [midway,above,xshift=-.3cm] {\small $P_{\tau_{n-1}}(\sigma_{n}^{-1})$} (T0);
\draw [->] (ref) to [out=130,in=-135] (-.6,1.2) node [above] {\small $\rho(\sigma_{n}^{-1})$} to [out=45,in=95] (ref);
\end{tikzpicture}
\end{equation}
The matrices $M_{\tau_k}$ are just identity matrices, identifying basis elements with the roots they categorify.
Hence we now drop the subscript $k$ and just write $P_{i,j}$. This choice forces the matrices associated to the generators $\sigma_i$ (which at the moment have no reason to braid). In turn, this forces the matrix associated to any braid $\beta$, and because we also want to impose that $M_{\beta\tau}$ is a permutation matrix as in the statement, this forces the choice of the basis $\mathbf{B}_{\beta\tau}$.
Because different braids mapping $\tau_0$ to the same $\tau$ might force different basis choices, we need to check that there exists a solution.

Recall from \eqref{Pijk} the explicit expression of the basis $\{P_{i,j}\}$. By applying the generators as in \eqref{eq:protosystem2}, one can explicitly compute the matrices
\begin{equation}
\sigma_{k+1}^{-1}P_{i,j}    =\left\{
\begin{alignedat}{2}
& tq^{-2}P_{k+1,k+2} \quad && \mathrm{if}\; i=k+1,\;j=k+2 \\
& P_{k+2,j} \quad && \mathrm{if}\; i=k+1,\;j>k+2 \\
& tq^{-1} P_{k+1,j} \quad && \mathrm{if}\; i=k+2 \\
& tq^{-1} P_{i,k+2} \quad && \mathrm{if} \; j=k+1 \\
& P_{i,k+1} \quad && \mathrm{if} \; i<k+1,\; j=k+2 \\
& P_{i,j} \quad && \mathrm{otherwise.}
\end{alignedat}
\right.
\end{equation}

Notice that $\sigma_k$ affects non-trivially basis vectors $P_{i,j}$ only if $i=k$ or $k+1$ or $j=k$ or $k+1$. In these cases, all it can do is exchange $k$ and $k+1$. So the commutation relation $\sigma_r\sigma_s=\sigma_s\sigma_r$ for $|r-s|>1$ is clear.

For the braid relation, consider the two words $\sigma_{k}^{-1}\sigma_{k+1}^{-1}\sigma_k^{-1}$ and $\sigma_{k+1}^{-1}\sigma_k^{-1}\sigma_{k+1}^{-1}$. Each of them will only affect elements $P_{i,j}$ that have $i$ or $j$ in $\{k,k+1,k+2\}$. Let us consider all cases
\begin{itemize}
    \item if $i=k$, $j=k+1$: one gets $t^2q^{-3}P_{k+1,k+2}$ in both cases;
    \item if $i=k$, $j=k+2$: one gets $t^2q^{-2}P_{k,k+2}$ in both cases;
    \item if $i=k$, $j>k+2$: one gets $P_{k+2,j}$ in both cases;
    \item if $i=k+1$, $j=k+2$: one gets $t^2q^{-3}P_{k,k+1}$ in both cases;
    \item if $i=k+1$, $j>k+2$: one gets $tq^{-1}P_{k+1,j}$ in both cases;
    \item if $i=k+2$: one gets $t^2q^{-2}P_{k,j}$ in both cases;
    \item if $j=k$: one gets $t^2q^{-2}P_{i,k+2}$ in both cases;
    \item if $j=k+1$, $i<k$: one gets $tq^{-1}P_{i,k+1}$ in both cases;
    \item if $j=k+2$, $i<k$: one gets $P_{i,k}$ in both cases. 
\end{itemize}
This exhausts the cases to check.

Now, we consider the question of the uniqueness of the choice of the basis. Let $\beta_1$ and $\beta_2$ be so that $\beta_1\tau_0$ and $\beta_2\tau_0$ have the same representative in $\tilde{T}$. This means that $\beta_1=\beta_2\gamma^l$ for some $l$.

Let us fix an arbitrary basis $\mathbf{b}$ for $\tau$, and consider $C_1$ (resp. $C_2$) the base change matrix between the chosen one and the one imposed by $\beta_1$ (resp. $\beta_2$). One reads
\[C_1P_{\tau_0}(\beta)=\rho(\beta_1),\;C_2P_{\tau}(\beta_2)=\rho(\beta_2).
\]
Above, the matrices $P$ are computed with respect to the chosen basis $\mathbf{b}$. We want
\[C_1=C_2\Leftrightarrow \rho(\beta_1)P_{\tau_0}(\beta_1)^{-1}P_{\tau_0}(\beta_2)\rho(\beta_2)^{-1}=\mathrm{Id}.
\]
This is equivalent to
\[
P_{\tau_0}(\gamma^k)=\rho(\gamma^k),
\]
which follows from our definitions.

\end{proof}


  \providecommand{\bysame}{\leavevmode\hbox to3em{\hrulefill}\thinspace}
\providecommand{\MR}{\relax\ifhmode\unskip\space\fi MR }
\providecommand{\MRhref}[2]{%
  \href{http://www.ams.org/mathscinet-getitem?mr=#1}{#2}
}
\providecommand{\href}[2]{#2}

\end{document}